\documentclass[11pt]{article}

\usepackage{csquotes}
\usepackage[centertags]{amsmath}
\usepackage{amssymb}
\usepackage{amsthm}
\usepackage{amsfonts}
\usepackage[margin=3cm]{geometry}
\usepackage[square,numbers]{natbib}
\usepackage{graphicx}
\usepackage{subcaption}
\usepackage{authblk}
\usepackage{color}
\usepackage{algorithm}
\usepackage{algpseudocode}
\usepackage{setspace}
\usepackage{titlesec}
\usepackage{graphicx}
\usepackage{booktabs}
\usepackage{url}
\usepackage{xcolor}
\usepackage{textcase}

\newtheorem{theorem}{Theorem}

\newtheorem{proposition}{Proposition}

\theoremstyle{definition}

\theoremstyle{remark}
\newtheorem{remark}{Remark}

\titleformat{\section}
  {\normalfont\normalsize\bfseries}{\thesection}{1em}{}

\begin{document}
\small
\onehalfspacing

\title{Optimal Allocation of Embedding Dimensions under Finite-Sample Constraints}
\author{Vasileios E. Papageorgiou}
\affil{National and Kapodistrian University of Athens, Department of Mathematics, Panepistemiopolis, Athens 15784, Greece

Email: vpapageor@math.uoa.gr}
\date{}
\maketitle

\begin{abstract}
\noindent The embedding dimension of categorical predictors is usually selected through heuristic tuning, although it directly affects model complexity, approximation quality, and finite-sample generalization. This paper formulates embedding dimension selection as a constrained allocation problem. The main contribution is to show that embedding capacity can be allocated across heterogeneous categorical predictors according to an explicit approximation–estimation tradeoff. We characterize approximation error through the singular-value tail of the latent category representation, while estimation error increases with total embedding complexity. Under a fixed global embedding budget and a tractable approximation model, this leads to a closed-form allocation rule in which the dimension assigned to each predictor is proportional to the square root of its approximation value relative to its parameter cost. Simulation experiments support the proposed approximation–estimation interpretation and show that the allocation rule improves budget efficiency relative to standard uniform and cardinality-based heuristics, particularly when the budget is binding and predictor heterogeneity is substantial. A real-data healthcare application further shows improvements in predictive accuracy and probabilistic calibration. Overall, the results establish embedding-dimension allocation as a principled finite-sample optimization problem rather than a purely heuristic modeling choice.
\end{abstract}

\noindent\textbf{Keywords:} Machine Learning in OR, Constrained Optimization, Nonlinear Programming, Multivariate Statistics, Predictive Models\\
\noindent\textbf{MSC 2020:} 90C30, 62H12, 68T07, 62B99, 90C10

\section{Introduction}\label{sec:intro}

The growing role of data-driven methods in operations research has increased the need for predictive models that are both flexible and statistically well-structured \citep{TaLiu2020,Zhang2024abc,Pinnaka2024}. This need is especially pronounced in applications involving categorical predictors, where learned representations are used to capture structured heterogeneity in the data. In these problems, the observed covariates may not fully capture systematic variation associated with the categorical structure, so an appropriate representation of categorical effects can be important for prediction, calibration and statistical efficiency \citep{Poziomska2026,Babar2024,Kamm2023}. When this structure is ignored, one effectively imposes a common model across systematically different categories, which may reduce predictive accuracy, distort the learned representation and introduce bias through omitted category effects \citep{Mary-Huard2007}.

Embeddings provide a natural way to represent categorical predictors through low-dimensional vectors learned from data. In electronic health records, for example, patient and concept embeddings have been used to capture clinically meaningful heterogeneity and characterize temporal progression patterns \citep{Xian2025,Wen2025}. More broadly, embeddings can outperform manually specified encodings when they extract task relevant structure from complex categorical inputs \citep{Liu2024b,Hancock2020,Dong2020}. Moreover, \citet{Li2020d} propose a cross-field embedding method that leverages higher-order co-occurrence patterns among categorical variables, while \citet{Mumtaz2022} develop hierarchy-based semantic embeddings that incorporate prior domain knowledge through concept hierarchies, including low-resource settings and multi-valued categorical features. Their effectiveness, however, depends critically on the embedding dimension. If it is too small, important structure may be missed. If it is too large, the model may incur unnecessary estimation cost and lower statistical efficiency.

This tradeoff connects the problem to a central theme in optimization, namely the allocation of limited resources under structural, statistical and computational constraints. In high-dimensional settings, such constraints often appear through the curse of dimensionality, which is well-known in areas such as dynamic programming and scheduling and has motivated the use of structure exploiting and dimension reduction methods to maintain tractability \citep{Barde2025,Shi2026}. Recent work shows that structural properties, such as multimodularity and separability, can substantially reduce the effective decision space while preserving optimality or near optimality \citep{Barde2025,Shi2026}. In parallel, statistical learning and data-driven optimization have emphasized the value of low-dimensional representations for improving predictive performance and computational efficiency  \citep{Jiang2026,Clerici2026,Papageorgiou2026a,Papageorgiou18062026}. Closely related is the literature on sufficient dimension reduction and subspace estimation, which studies how to identify low-dimensional structures that preserve predictive information \citep{Zeng2024,Zhang2026}.

A further related strand studies embedding dimension selection from a structural perspective, seeking principled criteria for identifying the minimal dimension needed to encode relevant information without sacrificing performance \citep{Gu2021c}. This perspective is closely related to classical resource allocation problems in operations research. For example, \citet{tenEikelder2023} study integer resource allocation problems with expensive function evaluations. \citet{Park2024} study a multi-period resource allocation problem with heterogeneous and non-shareable resources. 

A particularly relevant connection is with the optimal computing budget allocation (OCBA) literature. A limited simulation budget is distributed across competing alternatives to improve selection efficiency and reduce simulation cost, as in the foundational work on simulation budget allocation for ordinal optimization \citep{Chen2000a} and its later systematic treatment in stochastic simulation optimization \citep{Chen2010a}.
In black-box optimization, Bayesian optimization methods for constrained mixed-integer multi-objective problems under limited evaluation budgets are developed \citep{Duro2023}. In the broader literature on multi-objective resource allocation, limited resources must be distributed across competing activities under uncertainty \citep{Pan2022}. Finally, in data-driven operational problems, the objective may need to be learned from historical data rather than specified analytically \citep{Yu2026}.

Our contribution differs from existing resource allocation and dimension reduction frameworks by treating embedding dimension selection as an explicit constrained allocation problem. We study how representational capacity should be distributed across multiple heterogeneous categorical predictors when the total embedding budget is fixed. In predictive modeling, this budget serves as a global capacity parameter that governs the tradeoff between representational fidelity and finite-sample overfitting risk.

Operationally, the budget constraint represents the fact that embedding capacity is limited. Larger embeddings require more parameters, increase model complexity and lead to overfitting which deteriorates the model's predictive capacity. Therefore, embedding dimensions must be allocated selectively so that representational gains are balanced against the statistical cost of estimation in finite samples.

To address this issue, we develop a formal analysis of the tradeoff between approximation and estimation error induced by embeddings. On the basis of this characterization, we derive a closed-form allocation rule under the proposed risk approximation and the global budget constraint. The resulting methodology provides a theoretically grounded alternative to purely heuristic dimension selection and it situates embedding design within a rigorous constrained-allocation framework. In this respect, the paper contributes to the interface between operations research, statistics, and machine learning (ML) by establishing a principled basis for the allocation of embedding capacity under finite-budget constraints in predictive modeling.

Theoretical analysis is complemented by simulations and a real-data healthcare application that assess both validity and practical relevance. The simulations verify the spectral proxy for approximation error, illustrate the finite sample approximation-estimation tradeoff, and show that under a fixed global budget the proposed allocation rule outperforms uniform and cardinality-based heuristics, especially when predictor heterogeneity is substantial. The real-data application is included as a realistic healthcare prediction setting in which limited embedding capacity must be allocated across heterogeneous categorical variables. 

The remainder of the paper is organized as follows: Section~\ref{sec:method} introduces the modeling framework and formulates the embedding allocation problem. Section~\ref{sec:results} provides simulation evidence for the spectral characterization of approximation error, the finite sample approximation-estimation tradeoff, and the behavior of the proposed allocation rule under a fixed embedding budget. It also compares the proposed method with benchmark allocation strategies and reports a real-data experiment on a healthcare prediction task. Section~\ref{sec:concl} provides a summary of the main results, a discussion of their implications and directions for further work.

\section{Methodology}\label{sec:method}

This section develops the theoretical foundation of the proposed embedding-based framework. We first study the identification of the representation and then characterize the approximation error induced by finite-dimensional embeddings. We next turn to the estimation procedure and establish its convergence properties. Finally, we derive a generalization bound and show how the resulting approximation-estimation tradeoff yields an allocation rule for embedding dimensions across categorical predictors under a finite resource budget.

Throughout, let the predictive function be defined as
\[
f(\mathbf{x};\Theta)=\mathbf{w}^\top \boldsymbol{\phi}(\mathbf{x})+b,
\]
where \(\mathbf{x}\) denotes the input vector and \(b\in\mathbb{R}\) is an intercept term. Let \(c_j(\mathbf{x})\in\{1,\ldots,N_j\}\) denote the observed category of input \(\mathbf{x}\) in predictor \(j\). The feature representation is given by
\[
\boldsymbol{\phi}(\mathbf{x})
=
\Bigl[
\mathbf{z}(\mathbf{x})^\top,\;
\mathbf{u}_1(c_1(\mathbf{x}))^\top,\;
\ldots,\;
\mathbf{u}_m(c_m(\mathbf{x}))^\top
\Bigr]^\top,
\]
where \(\mathbf{z}(\mathbf{x})\in\mathbb{R}^p\) denotes the numerical covariates and \(\mathbf{u}_j(c_j(\mathbf{x}))\in\mathbb{R}^{r_j}\) denotes the representation vector associated with the \(j\)-th categorical predictor. For each predictor \(j=1,\ldots,m\), let
\[
\mathbf U^{(j)}\in \mathbb R^{N_j\times r_j},
\]
denote the representation matrix, where \(N_j\) is the number of categories in predictor \(j\) and \(r_j\) is the corresponding representation dimension. The vector \(\mathbf u_j(c)\) denotes the embedding vector associated with category \(c\), that is, the transpose of the \(c\)-th row of \(\mathbf U^{(j)}\). Hence, for an input \(\mathbf x\), the embedding used for predictor \(j\) is \(\mathbf u_j(c_j(\mathbf x))\).

 We partition the coefficient vector conformably as
\[
\mathbf{w}=
\Bigl[\mathbf{w}_z^\top,\;\mathbf{w}_1^\top,\;\ldots,\;\mathbf{w}_m^\top\Bigr]^\top,
\]
where \(\mathbf{w}_z\in\mathbb{R}^p\) and \(\mathbf{w}_j\in\mathbb{R}^{r_j}\) for \(j=1,\ldots,m\). Hence,
\[
f(\mathbf{x};\Theta)
=
\mathbf{w}_z^\top \mathbf{z}(\mathbf{x})
+\sum_{j=1}^m \mathbf{w}_j^\top \mathbf{u}_j(c_j(\mathbf{x}))
+b.
\]

We note that in the empirical analysis, the population representation matrix \(\mathbf{U}^{(j)}\) is not observed. Instead, a finite-dimensional learned representation
\(\widehat{\mathbf{U}}_{d_j}^{(j)}\in\mathbb{R}^{N_j\times d_j}\) is estimated from the training sample.

\subsection{Identification of the embedding representation}

We begin by clarifying a basic structural property of the model. Although the predictive function is uniquely determined by the joint effect of the embeddings and the prediction layer, the numerical coordinates of the learned embeddings themselves need not be unique. This issue matters for interpretation, because it determines which aspects of the learned representation are substantively meaningful. The next proposition shows that the predictive structure is invariant to orthogonal transformations of the embedding coordinates.

\begin{proposition}
\label{prop:nonidentifiability}
For each predictor \(j=1,\ldots,m\), let \(\mathbf{Q}_j\in\mathbb{R}^{r_j\times r_j}\) be any orthogonal matrix satisfying
\[
\mathbf{Q}_j^\top \mathbf{Q}_j=\mathbf{Q}_j\mathbf{Q}_j^\top=\mathbf{I}_{r_j}.
\]
Define the transformed representation matrix and coefficient vector by
\[
\widetilde{\mathbf{U}}^{(j)}=\mathbf{U}^{(j)}\mathbf{Q}_j,
\qquad
\widetilde{\mathbf{w}}_j=\mathbf{Q}_j^\top \mathbf{w}_j,
\]
while keeping \(\mathbf{w}_z\) and \(b\) unchanged. Then, for every input \(\mathbf{x}\),
\[
\widetilde{f}(\mathbf{x};\widetilde{\Theta})=f(\mathbf{x};\Theta).
\]
Hence, the representation is not uniquely identifiable. In particular, it is invariant under orthogonal transformations of the embedding coordinates.
\end{proposition}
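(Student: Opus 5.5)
The plan is a direct verification, predictor by predictor, that each transformed embedding term equals the original one. Everything outside the categorical blocks is untouched by the transformation. Since \(\mathbf{w}_z\) and \(b\) are unchanged, I will start from the additive decomposition
\[
f(\mathbf{x};\Theta)=\mathbf{w}_z^\top \mathbf{z}(\mathbf{x})+\sum_{j=1}^m \mathbf{w}_j^\top \mathbf{u}_j(c_j(\mathbf{x}))+b .
\]
From it, it suffices to show that for each \(j\) and every category \(c\in\{1,\ldots,N_j\}\) the scalar contribution \(\widetilde{\mathbf{w}}_j^\top\widetilde{\mathbf{u}}_j(c)\) equals \(\mathbf{w}_j^\top\mathbf{u}_j(c)\).

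First I will identify the transformed embedding vector. The vector \(\mathbf{u}_j(c)\) is the transpose of the \(c\)-th row of \(\mathbf{U}^{(j)}\), and the \(c\)-th row of \(\mathbf{U}^{(j)}\mathbf{Q}_j\) is \(\mathbf{u}_j(c)^\top\mathbf{Q}_j\). Hence \(\widetilde{\mathbf{u}}_j(c)=\mathbf{Q}_j^\top\mathbf{u}_j(c)\). Next I compute
\[
\widetilde{\mathbf{w}}_j^\top\widetilde{\mathbf{u}}_j(c)=(\mathbf{Q}_j^\top\mathbf{w}_j)^\top\mathbf{Q}_j^\top\mathbf{u}_j(c)=\mathbf{w}_j^\top\mathbf{Q}_j\mathbf{Q}_j^\top\mathbf{u}_j(c)=\mathbf{w}_j^\top\mathbf{u}_j(c),
\]
using \(\mathbf{Q}_j\mathbf{Q}_j^\top=\mathbf{I}_{r_j}\). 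Evaluating at \(c=c_j(\mathbf{x})\) and summing over \(j\) gives \(\widetilde f(\mathbf{x};\widetilde\Theta)=f(\mathbf{x};\Theta)\) for every input \(\mathbf{x}\).

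For the non-identifiability conclusion, I will note that whenever \(r_j\ge1\) there exist orthogonal \(\mathbf{Q}_j\neq\mathbf{I}_{r_j}\), for instance \(-\mathbf{I}_{r_j}\). If \(\mathbf{U}^{(j)}\neq\mathbf{0}\), then \(\mathbf{U}^{(j)}\mathbf{Q}_j\neq\mathbf{U}^{(j)}\) for a suitable choice: with \(\mathbf{Q}_j=-\mathbf{I}_{r_j}\), equality would force \(\mathbf{U}^{(j)}=\mathbf{0}\). So distinct parameter values induce the same predictive function, and the map from parameters to predictors is not injective.

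There is no real obstacle here. The only point needing care is the row-versus-column bookkeeping, namely confirming that right-multiplying \(\mathbf{U}^{(j)}\) by \(\mathbf{Q}_j\) acts on each embedding vector as \(\mathbf{Q}_j^\top\), so that the transposes cancel correctly with \(\widetilde{\mathbf{w}}_j=\mathbf{Q}_j^\top\mathbf{w}_j\).
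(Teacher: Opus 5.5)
Your proof is correct and follows essentially the same route as the paper: you use the row convention to identify the transformed embedding vector as $\mathbf{Q}_j^\top\mathbf{u}_j(c)$, cancel $\mathbf{Q}_j\mathbf{Q}_j^\top=\mathbf{I}_{r_j}$ in the inner product, and sum over predictors with $\mathbf{w}_z$ and $b$ unchanged. Your explicit choice $\mathbf{Q}_j=-\mathbf{I}_{r_j}$, which gives distinct parameters with the same predictor, is a small addition that makes the non-identifiability claim concrete; the paper leaves that step implicit.
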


\begin{proof}
For each predictor \(j\), the transformed representation associated with category \(c_j(\mathbf{x})\) is induced by replacing \(\mathbf{U}^{(j)}\) with
\[
\widetilde{\mathbf{U}}^{(j)}=\mathbf{U}^{(j)}\mathbf{Q}_j.
\]
Accordingly, the transformed representation vector is
\[
\widetilde{\mathbf{u}}_j(c_j(\mathbf{x}))=\mathbf{Q}_j^\top \mathbf{u}_j(c_j(\mathbf{x})),
\]
under the column vector convention used in the predictive function. Therefore,
\[
\widetilde{\mathbf{w}}_j^\top \widetilde{\mathbf{u}}_j(c_j(\mathbf{x}))
=
(\mathbf{Q}_j^\top \mathbf{w}_j)^\top (\mathbf{Q}_j^\top \mathbf{u}_j(c_j(\mathbf{x})))
=
\mathbf{w}_j^\top \mathbf{Q}_j \mathbf{Q}_j^\top \mathbf{u}_j(c_j(\mathbf{x}))
=
\mathbf{w}_j^\top \mathbf{u}_j(c_j(\mathbf{x})),
\]
where the last equality follows from orthogonality. Since \(\mathbf{w}_z\) and \(b\) remain unchanged, we obtain
\[
\widetilde{f}(\mathbf{x};\widetilde{\Theta})
=
\mathbf{w}_z^\top \mathbf{z}(\mathbf{x})+\sum_{j=1}^m \widetilde{\mathbf{w}}_j^\top \widetilde{\mathbf{u}}_j(c_j(\mathbf{x}))+b
=
\mathbf{w}_z^\top \mathbf{z}(\mathbf{x})+\sum_{j=1}^m \mathbf{w}_j^\top \mathbf{u}_j(c_j(\mathbf{x}))+b
=
f(\mathbf{x};\Theta).
\]
\end{proof}

The restriction to orthogonal transformations is mainly motivated by the geometric quantities used in the subsequent analysis. More general invertible linear reparameterizations can also leave the predictive function unchanged when the corresponding prediction-layer coefficients are transformed appropriately. However, arbitrary invertible transformations need not preserve the Euclidean geometry of the embedding space. By contrast, orthogonal transformations preserve inner products, Frobenius norms, and singular values, which are central to the spectral approximation argument developed below. Thus, the orthogonal case captures the relevant non-identifiability of the representation while keeping the geometry of the embedding space consistent with the approximation framework.

\begin{remark}
Proposition \ref{prop:nonidentifiability} shows that the numerical coordinates of the learned embeddings are not unique. Therefore, interpretation should focus on predictive performance and structural relationships rather than on the specific coordinate values of the embeddings. 
\end{remark}

\subsection{Approximation properties of finite-dimensional embeddings}

Having clarified the identification issue, we next study the approximation role of the embedding representation. In the proposed framework, each categorical predictor is represented through a latent category matrix. A finite embedding dimension corresponds to a low-rank reconstruction of this latent category structure. This representation improves tractability, but it may also induce approximation error whenever the underlying category structure is more complex than the chosen representation dimension permits.

In this section, approximation error refers to the representation-level compression error induced by replacing the full category-level latent representation with a lower-rank reconstruction in the same ambient latent space. The induced prediction-level perturbation is then controlled by this representation-level error through the prediction-layer coefficient vector. Since each observation uses the row of the latent matrix corresponding to its observed category, the relevant approximation error is induced by the row-wise reconstruction errors of the category vectors. The Frobenius residual provides a natural aggregate measure of these row-wise errors, and therefore connects the matrix approximation problem with the predictor-level approximation error studied below. The next proposition formalizes this effect.

\begin{proposition}
\label{prop:approximation}
Suppose that the true population predictor admits the representation
\[
f(\mathbf{x};\Theta)
=
\mathbf{w}_z^\top \mathbf{z}(\mathbf{x})
+\sum_{j=1}^m \mathbf{w}_j^\top \mathbf{u}_j(c_j(\mathbf{x}))
+b.
\]
where \(\mathbf{u}_j(c)\in\mathbb{R}^{r_j}\) is a latent category vector for category \(c\) in predictor \(j\). Define
\[
\mathbf{U}^{(j)}=
\begin{bmatrix}
\mathbf{u}_j(1)^\top\\
\vdots\\
\mathbf{u}_j(N_j)^\top
\end{bmatrix}
\in\mathbb{R}^{N_j\times r_j},
\]
and let
\[
\sigma_1^{(j)}\geq \sigma_2^{(j)}\geq \cdots
\]
denote its singular values.

For any matrix \(\mathbf{V}^{(j)}\in\mathbb{R}^{N_j\times r_j}\) with rank at most \(d_j\), let \(\mathbf{v}_j(c)\in\mathbb{R}^{r_j}\) denote the transpose of its \(c\)-th row. Thus, \(\mathbf{V}^{(j)}\) has the same ambient dimension as \(\mathbf{U}^{(j)}\), but rank at most \(d_j\). Equivalently, \(\mathbf{v}_j(c)\) is a rank-\(d_j\) reconstruction of the full latent vector \(\mathbf{u}_j(c)\) in the original latent space, rather than a \(d_j\)-dimensional vector. If \(\pi_{j,c}\) denotes the probability of category \(c\) for predictor \(j\), define the approximation error by
\[
\varepsilon_{\mathrm{approx}}^{(j)}(d_j)
=
\inf_{\operatorname{rank}(\mathbf{V}^{(j)})\leq d_j}
\left(
\sum_{c=1}^{N_j}
\pi_{j,c}
\left\|
\mathbf{u}_j(c)-\mathbf{v}_j(c)
\right\|_2^2
\right)^{1/2}.
\]
Then, if predictor \(j\) is represented using dimension \(d_j\),
\begin{equation}
\label{eq:1}
\varepsilon_{\mathrm{approx}}^{(j)}(d_j)
\leq
\left(
\sum_{k>d_j}\left(\sigma_k^{(j)}\right)^2
\right)^{1/2}.
\end{equation}
\end{proposition}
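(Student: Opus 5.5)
The plan is to bound the infimum from above by evaluating the objective at one explicit competitor: the truncated singular value decomposition of \(\mathbf{U}^{(j)}\). The weighted error is then controlled by the unweighted Frobenius residual, which the Eckart--Young--Mirsky theorem (or simply the orthogonality of the singular vectors) identifies with the singular-value tail.

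\emph{Step 1: construct the competitor.} Write the SVD \(\mathbf{U}^{(j)}=\sum_{k}\sigma_k^{(j)}\mathbf{a}_k\mathbf{b}_k^\top\), where \(\{\mathbf{a}_k\}\subset\mathbb{R}^{N_j}\) and \(\{\mathbf{b}_k\}\subset\mathbb{R}^{r_j}\) are orthonormal. Set \(\mathbf{V}^{(j)}_{\ast}=\sum_{k\le d_j}\sigma_k^{(j)}\mathbf{a}_k\mathbf{b}_k^\top\). This matrix lies in \(\mathbb{R}^{N_j\times r_j}\) and has rank at most \(d_j\), so it is admissible. If \(d_j\ge\operatorname{rank}(\mathbf{U}^{(j)})\), we take \(\mathbf{V}^{(j)}_\ast=\mathbf{U}^{(j)}\), and both sides of \eqref{eq:1} vanish. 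It follows that
\[
\varepsilon_{\mathrm{approx}}^{(j)}(d_j)\le\Bigl(\sum_{c=1}^{N_j}\pi_{j,c}\,\bigl\|\mathbf{u}_j(c)-\mathbf{v}^\ast_j(c)\bigr\|_2^2\Bigr)^{1/2}.
\]

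\emph{Step 2: remove the weights.} Since \(\pi_{j,c}\in[0,1]\) for every \(c\), we have \(\sum_c\pi_{j,c}\|\mathbf{u}_j(c)-\mathbf{v}^\ast_j(c)\|_2^2\le\sum_c\|\mathbf{u}_j(c)-\mathbf{v}^\ast_j(c)\|_2^2=\|\mathbf{U}^{(j)}-\mathbf{V}^{(j)}_\ast\|_F^2\). Here we use that the rows of the difference matrix are exactly the vectors \(\mathbf{u}_j(c)-\mathbf{v}^\ast_j(c)\). This step is crude, since it discards the factor \(\max_c\pi_{j,c}\le 1\), but it is all the statement requires.

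\emph{Step 3: compute the residual.} We have \(\mathbf{U}^{(j)}-\mathbf{V}^{(j)}_\ast=\sum_{k>d_j}\sigma_k^{(j)}\mathbf{a}_k\mathbf{b}_k^\top\). By the orthonormality of the singular vectors, its squared Frobenius norm equals \(\sum_{k>d_j}(\sigma_k^{(j)})^2\); this is the Eckart--Young identity. Taking square roots yields \eqref{eq:1}.

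The only real subtlety, and the place where I would be careful, is the interplay between the weighted criterion and the SVD. The truncated SVD is optimal for the unweighted Frobenius norm, not for the \(\pi\)-weighted one. The argument therefore relies on the infimum being an upper-bound target rather than claiming equality. An alternative, sharper route would apply Eckart--Young to \(\mathbf{D}_\pi^{1/2}\mathbf{U}^{(j)}\) with \(\mathbf{D}_\pi=\operatorname{diag}(\pi_{j,c})\). That route would require comparing the singular values of \(\mathbf{D}_\pi^{1/2}\mathbf{U}^{(j)}\) with those of \(\mathbf{U}^{(j)}\), via \(\sigma_k(\mathbf{D}\mathbf{U})\le\|\mathbf{D}\|_2\,\sigma_k(\mathbf{U})\). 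The direct competitor argument above avoids this comparison entirely.
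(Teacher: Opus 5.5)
Your proof is correct and takes essentially the same route as the paper: both bound the $\pi$-weighted error by the unweighted Frobenius residual using $\pi_{j,c}\le 1$, then identify that residual with the singular-value tail. The only difference is cosmetic: you evaluate at the truncated SVD directly, whereas the paper takes the infimum over all rank-$d_j$ matrices and cites Eckart--Young--Mirsky, even though only the achievability direction you compute explicitly is needed.
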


\begin{proof}
Let \(\mathbf{V}^{(j)}\in\mathbb{R}^{N_j\times r_j}\) be any matrix with rank at most \(d_j\), and let \(\mathbf{v}_j(c)\in\mathbb{R}^{r_j}\) denote the transpose of its \(c\)-th row. Since \(0\leq \pi_{j,c}\leq 1\), we have
\[
\left(
\sum_{c=1}^{N_j}
\pi_{j,c}
\left\|
\mathbf{u}_j(c)-\mathbf{v}_j(c)
\right\|_2^2
\right)^{1/2}
\leq
\left\|
\mathbf{U}^{(j)}-\mathbf{V}^{(j)}
\right\|_F.
\]
Taking the infimum over all matrices \(\mathbf{V}^{(j)}\) with rank at most \(d_j\) gives
\[
\varepsilon_{\mathrm{approx}}^{(j)}(d_j)
\leq
\inf_{\operatorname{rank}(\mathbf{V}^{(j)})\leq d_j}
\left\|
\mathbf{U}^{(j)}-\mathbf{V}^{(j)}
\right\|_F.
\]
By the Eckart--Young--Mirsky theorem \citep{Golub1987},
\[
\inf_{\operatorname{rank}(\mathbf{V}^{(j)})\leq d_j}
\left\|
\mathbf{U}^{(j)}-\mathbf{V}^{(j)}
\right\|_F^2
=
\sum_{k>d_j}\left(\sigma_k^{(j)}\right)^2.
\]
Therefore,
\[
\varepsilon_{\mathrm{approx}}^{(j)}(d_j)
\leq
\left(
\sum_{k>d_j}\left(\sigma_k^{(j)}\right)^2
\right)^{1/2}.
\]
\end{proof}

Proposition~\ref{prop:approximation} identifies the approximation component of the methodology. In particular, it shows that increasing the embedding dimension reduces the compression error incurred when category-specific latent structure is represented by a lower-rank reconstruction. Having established the approximation properties of the embedding representation, the next reasonable step is to study how these representations can be estimated from data. This motivates the sequential estimation analysis in Section~\ref{sec:seqestimation}, where we quantify how estimation error accumulates across the different stages of the procedure and how this interacts with the approximation bias characterized above.

\subsection{Sequential estimation}\label{sec:seqestimation}

We now turn from the population representation to its empirical estimation. Since the model contains both prediction layer parameters and estimated representation matrices, direct joint optimization may be computationally demanding. A natural alternative is to use a sequential block-wise estimation procedure. The next theorem provides a convergence guarantee for that procedure and therefore justifies it as a stable computational strategy.

\begin{theorem}
\label{thm:convergence}
Consider the regularized empirical risk minimization problem
\[
\min_{\mathbf{w},\,b,\,\widehat{\mathbf{U}}}\;
L(\mathbf{w},b,\widehat{\mathbf{U}})
=
\frac{1}{n}\sum_{i=1}^n \ell\bigl(y_i, f(\mathbf{x}_i;\mathbf{w},b,\widehat{\mathbf{U}})\bigr)
+
\lambda_w \|\mathbf{w}\|_2^2
+
\sum_{j=1}^m \lambda_j \|\widehat{\mathbf{U}}^{(j)}\|_F^2,
\]
where \(\widehat{\mathbf{U}}=(\widehat{\mathbf{U}}^{(1)},\ldots,\widehat{\mathbf{U}}^{(m)})\), \(\|\mathbf{w}\|_2\) denotes the Euclidean norm of \(\mathbf{w}\), and \(\|\widehat{\mathbf{U}}^{(j)}\|_F\) denotes the Frobenius norm of \(\widehat{\mathbf{U}}^{(j)}\). Assume that \(\ell(\cdot,\cdot)\) is continuous and differentiable, that \(\lambda_w>0\) and \(\lambda_j>0\) for all \(j=1,\ldots,m\), and that \(L(\mathbf{w},b,\widehat{\mathbf{U}})\) is bounded below.

Let \(\{(\mathbf{w}^t,b^t,\widehat{\mathbf{U}}^t)\}_{t\geq 0}\) be the sequence generated by the alternating scheme
\[
(\mathbf{w}^{t+1},b^{t+1})\in\arg\min_{\mathbf{w},b} L(\mathbf{w},b,\widehat{\mathbf{U}}^t),
\qquad
\widehat{\mathbf{U}}^{t+1}\in\arg\min_{\widehat{\mathbf{U}}} L(\mathbf{w}^{t+1},b^{t+1},\widehat{\mathbf{U}}).
\]
Then the sequence of objective values \(\{L(\mathbf{w}^t,b^t,\widehat{\mathbf{U}}^t)\}_{t\geq 0}\) is non-increasing. Under the standard regularity conditions for exact block coordinate descent, every accumulation point of the generated sequence is a stationary point of \(L\).
\end{theorem}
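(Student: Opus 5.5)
The monotonicity claim follows from a two-line sandwich argument using only the defining property of each block update. Write \(L^t=L(\mathbf{w}^t,b^t,\widehat{\mathbf{U}}^t)\). Since \((\mathbf{w}^{t+1},b^{t+1})\) minimizes \(L(\cdot,\cdot,\widehat{\mathbf{U}}^t)\), and \((\mathbf{w}^t,b^t)\) is a feasible point of that problem, we get
\[
L(\mathbf{w}^{t+1},b^{t+1},\widehat{\mathbf{U}}^t)\leq L^t .
\]
Since \(\widehat{\mathbf{U}}^{t+1}\) minimizes \(L(\mathbf{w}^{t+1},b^{t+1},\cdot)\), and \(\widehat{\mathbf{U}}^t\) is feasible, we get
\[
L^{t+1}\leq L(\mathbf{w}^{t+1},b^{t+1},\widehat{\mathbf{U}}^t).
\]
Chaining the two inequalities gives \(L^{t+1}\leq L^t\). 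Because \(L\) is bounded below by assumption, the monotone sequence \(\{L^t\}\) converges to some finite limit \(L^\ast\).

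Before this, I would check that the argmin sets are nonempty, so that the scheme is well defined. I would argue this through coercivity. The penalties \(\lambda_w\|\mathbf{w}\|_2^2\) and \(\lambda_j\|\widehat{\mathbf{U}}^{(j)}\|_F^2\) force the objective to infinity as these blocks grow. The intercept \(b\) is not penalized, so there I would either add a mild assumption that \(\ell\) is coercive in its second argument, or treat existence as part of the ``standard regularity conditions''. Continuity of \(\ell\) then gives attainment by Weierstrass on sublevel sets.

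For the stationarity claim, the plan is to reduce to the classical two-block result for exact block coordinate descent (Grippo--Sciandrone, or Bertsekas, Prop.~2.7.1). That result says that for a continuously differentiable objective with two blocks, every limit point of the exact alternating minimization is stationary; no uniqueness of block minimizers is needed in the two-block case. Concretely, take a subsequence \((\mathbf{w}^{t_k},b^{t_k},\widehat{\mathbf{U}}^{t_k})\to(\bar{\mathbf{w}},\bar b,\bar{\mathbf{U}})\). By the coercivity above, the iterates lie in the compact sublevel set \(\{L\leq L^0\}\). Passing to a further subsequence, the intermediate points \((\mathbf{w}^{t_k+1},b^{t_k+1},\widehat{\mathbf{U}}^{t_k})\) also converge. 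Using the continuity of \(L\), the limit of these intermediate points has the same objective value \(L^\ast\).

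From optimality of each block update I would pass to the limit in the inequalities
\[
L(\mathbf{w}^{t_k+1},b^{t_k+1},\widehat{\mathbf{U}}^{t_k})\leq L(\mathbf{w},b,\widehat{\mathbf{U}}^{t_k})\quad\text{for all }(\mathbf{w},b),
\]
and similarly for the \(\widehat{\mathbf{U}}\) block. This yields block-wise optimality, and hence vanishing block partial gradients, at the limit. Since \(L\) is differentiable (\(\ell\) is differentiable, \(f\) is bilinear in \((\mathbf{w},\widehat{\mathbf{U}})\), and the penalties are smooth), the full gradient vanishes.

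The main obstacle is the two-block limiting argument. The intermediate limit point need not coincide with \((\bar{\mathbf{w}},\bar b,\bar{\mathbf{U}})\), so block optimality in \((\mathbf{w},b)\) is obtained at a possibly different point. I would handle this exactly as in Grippo--Sciandrone's two-block theorem, invoking it directly, and I would state clearly that continuous differentiability of \(L\) and compactness of sublevel sets are the regularity conditions being assumed.
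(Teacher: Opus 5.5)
Your proposal is correct and follows the paper's proof. You use the same sandwich of the two block-optimality inequalities for monotonicity, boundedness below for convergence of \(L^t\), and exact block-coordinate-descent theory for stationarity. You are more concrete than the paper by pinning the last step to the two-block Grippo--Sciandrone result; the paper cites only unspecified regularity conditions. Note that Bertsekas's Prop.~2.7.1 assumes unique block minimizers, so it is Grippo--Sciandrone you need.

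The obstacle you flag does not actually arise, so you can drop the compact sublevel sets and the coercivity in \(b\). Letting \(k\to\infty\) in \(L^{t_k+1}\le L(\mathbf{w}^{t_k+1},b^{t_k+1},\widehat{\mathbf{U}}^{t_k})\le L(\mathbf{w},b,\widehat{\mathbf{U}}^{t_k})\) gives \(L(\bar{\mathbf{w}},\bar b,\bar{\mathbf{U}})=L^\ast\le L(\mathbf{w},b,\bar{\mathbf{U}})\) for every \((\mathbf{w},b)\), without the intermediate points ever needing to converge. The inequality \(L^{t_k}\le L(\mathbf{w}^{t_k},b^{t_k},\widehat{\mathbf{U}})\) handles the \(\widehat{\mathbf{U}}\)-block the same way. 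Hence you only use continuity of \(L\), differentiability at the limit, and well-posed block updates, which the statement already presupposes.
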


\begin{proof}
By construction, the update of \((\mathbf{w},b)\) at iteration \(t\) satisfies
\[
L(\mathbf{w}^{t+1},b^{t+1},\widehat{\mathbf{U}}^t)\leq L(\mathbf{w}^t,b^t,\widehat{\mathbf{U}}^t).
\]
Similarly, the update of \(\widehat{\mathbf{U}}\) satisfies
\[
L(\mathbf{w}^{t+1},b^{t+1},\widehat{\mathbf{U}}^{t+1})\leq L(\mathbf{w}^{t+1},b^{t+1},\widehat{\mathbf{U}}^t).
\]
Combining the two inequalities yields
\[
L(\mathbf{w}^{t+1},b^{t+1},\widehat{\mathbf{U}}^{t+1})\leq L(\mathbf{w}^t,b^t,\widehat{\mathbf{U}}^t),
\]
so the sequence of objective values is non-increasing.

Because \(L\) is bounded below, the sequence \(\{L(\mathbf{w}^t,b^t,\widehat{\mathbf{U}}^t)\}\) converges to a finite limit. Let \((\mathbf{w}^\bullet,b^\bullet,\widehat{\mathbf{U}}^\bullet)\) be an arbitrary accumulation point, and let \(\{(\mathbf{w}^{t_k},b^{t_k},\widehat{\mathbf{U}}^{t_k})\}_{k\geq 1}\) be a convergent subsequence with limit \((\mathbf{w}^\bullet,b^\bullet,\widehat{\mathbf{U}}^\bullet)\).

Since each block update is an exact minimization over the corresponding block, the generated sequence is an exact block coordinate descent sequence. Under the standard regularity conditions for exact block coordinate descent stated in the theorem, every accumulation point of such a sequence is coordinate-wise stationary. Hence, for any accumulation point \((\mathbf{w}^\bullet,b^\bullet,\widehat{\mathbf{U}}^\bullet)\), we have
\[
\nabla_{\mathbf{w}} L(\mathbf{w}^\bullet,b^\bullet,\widehat{\mathbf{U}}^\bullet)=0,
\qquad
\nabla_b L(\mathbf{w}^\bullet,b^\bullet,\widehat{\mathbf{U}}^\bullet)=0,
\qquad
\nabla_{\widehat{\mathbf{U}}} L(\mathbf{w}^\bullet,b^\bullet,\widehat{\mathbf{U}}^\bullet)=0.
\]
Therefore, \((\mathbf{w}^\bullet,b^\bullet,\widehat{\mathbf{U}}^\bullet)\) is a stationary point of \(L\).
\end{proof}

Theorem \ref{thm:convergence} establishes the optimization properties of the proposed estimation procedure by guaranteeing monotone descent of the objective and stationarity of accumulation points. With this algorithmic justification in place, the next section shifts attention from optimization to statistical performance, focusing on the generalization properties of the learned estimator.

\subsection{Generalization performance}

While the preceding result establishes the optimization behavior of the proposed estimation procedure, a full justification also requires a statistical characterization of out-of-sample performance. The estimated representation layer increases the expressive capacity of the model, but this additional flexibility comes at a cost in terms of statistical accuracy. The next theorem formalizes this tradeoff.

\begin{theorem}
\label{thm:generalization}
Assume that \(\|\mathbf{w}\|_2\leq B_w\), that \(|b|\leq B_b\), and that
\(\|\widehat{\mathbf{U}}^{(j)}\|_F\leq B_j\) for all \(j=1,\ldots,m\). Assume also that the numerical covariates are uniformly bounded, namely
\(\|\mathbf{z}(\mathbf{x})\|_2\leq B_z\) for all \(\mathbf{x}\). Let the loss function
\(\ell:\mathcal{Y}\times\mathbb{R}\to[0,1]\) be Lipschitz continuous in its second argument with constant \(L_\ell\). Let
\[
R(f)=\mathbb{E}_{(\mathbf{X},Y)}[\ell(Y,f(\mathbf{X}))],
\]
denote the expected risk, and let
\[
\widehat{R}(f)=\frac{1}{n}\sum_{i=1}^n \ell\bigl(y_i,f(\mathbf{x}_i)\bigr),
\]
denote the empirical risk. Then, with probability at least \(1-\delta\) over the draw of the
training sample, for  
\[
\mathcal{F}
=
\left\{
\mathbf{x}\mapsto \mathbf{w}^\top \boldsymbol{\phi}(\mathbf{x})+b
:\;
\|\mathbf{w}\|_2\leq B_w,\;
|b|\leq B_b,\;
\|\widehat{\mathbf{U}}^{(j)}\|_F\leq B_j,\;
j=1,\ldots,m
\right\},
\]
every \(f\in\mathcal{F}\) satisfies

\begin{equation}\label{eq2}
R(f)
\leq
\widehat{R}(f)
+
C\sqrt{\frac{\sum_{j=1}^m N_j d_j}{n}}
+
\sqrt{\frac{8\log(2/\delta)}{n}},
\end{equation}
where \(C>0\) is a constant depending on \(L_\ell\), \(B_w\), \(B_b\), \(B_z\), the fixed numerical dimension \(p\) of $\mathbf{z}(\mathbf{x})$, and the bounds \(B_1,\ldots,B_m\) on the estimated representation matrices.
\end{theorem}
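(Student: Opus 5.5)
The plan is to follow the standard Rademacher-complexity route. Since the loss takes values in \([0,1]\), the usual symmetrization argument together with McDiarmid's bounded-differences inequality gives, with probability at least \(1-\delta\) and uniformly over \(f\in\mathcal{F}\),
\[
R(f)\leq \widehat{R}(f)+2\,\mathfrak{R}_n(\ell\circ\mathcal{F})+\sqrt{\frac{8\log(2/\delta)}{n}}.
\]
The two-sided form with \(\log(2/\delta)\) and a generous constant produces the last term of (\ref{eq2}). Talagrand's contraction lemma then gives \(\mathfrak{R}_n(\ell\circ\mathcal{F})\leq L_\ell\,\mathfrak{R}_n(\mathcal{F})\). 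The remaining work is to bound \(\mathfrak{R}_n(\mathcal{F})\) by a constant times \(\sqrt{\sum_j N_j d_j/n}\).

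The class is a sum of components, so I will use sub-additivity of Rademacher complexity over the decomposition
\[
f=\mathbf{w}_z^\top\mathbf{z}+\sum_{j=1}^m \mathbf{w}_j^\top\mathbf{u}_j(c_j)+b.
\]
The bias term contributes at most \(B_b/\sqrt{n}\). The numerical linear term contributes at most \(B_wB_z/\sqrt{n}\) by Cauchy--Schwarz and Jensen. Both are \(O(1/\sqrt{n})\) and can be absorbed, because \(\sum_j N_jd_j\geq 1\). This absorption is where \(C\) picks up its dependence on \(B_b\), \(B_z\), \(B_w\), and \(p\), if one prefers a covering of the \(p\)-dimensional ball.

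For the \(j\)-th categorical component, the function \(\mathbf{x}\mapsto \mathbf{w}_j^\top\mathbf{u}_j(c_j(\mathbf{x}))\) depends on \(\mathbf{x}\) only through the category. It is therefore determined by the vector \(\mathbf{a}^{(j)}=\widehat{\mathbf{U}}^{(j)}\mathbf{w}_j\in\mathbb{R}^{N_j}\), which satisfies \(\|\mathbf{a}^{(j)}\|_\infty\leq\|\widehat{\mathbf{U}}^{(j)}\|_F\|\mathbf{w}_j\|_2\leq B_jB_w\). This component class is the parametric family indexed by \(\widehat{\mathbf{U}}^{(j)}\in\mathbb{R}^{N_j\times d_j}\) and \(\mathbf{w}_j\in\mathbb{R}^{d_j}\), a bounded set of dimension \(N_jd_j+d_j\). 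I will bound it with a covering argument. The parametrization is Lipschitz, with constant depending on \(B_w\) and \(B_j\), in the Euclidean parameter norm. This gives an \(\epsilon\)-cover in sup norm of size \((3B/\epsilon)^{N_jd_j+d_j}\). Dudley's entropy integral then yields \(\mathfrak{R}_n\lesssim B_jB_w\sqrt{(N_jd_j+d_j)/n}\), and \(d_j\leq N_jd_j\) absorbs the extra term. The simpler alternative is to treat the class as a subset of the finite-dimensional linear span of category indicators. That gives \(\sqrt{N_j/n}\), which is even smaller, but it does not exhibit the dimension dependence, so I will use the covering on the joint parameters, for which a Massart/Dudley bound gives \(\sqrt{N_jd_j/n}\).

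To combine the components, I will cover the whole parameter set jointly rather than summing, because summing would give \(\sum_j\sqrt{N_jd_j}\), which exceeds \(\sqrt{\sum_j N_jd_j}\). The total parameter count is \(D=p+1+\sum_j(N_j+1)d_j\), and the sup-norm Lipschitz constant of the map from parameters to functions is bounded in terms of \(B_w\), \(B_b\), \(B_z\), and \(\max_j B_j\). Dudley's integral then gives \(\mathfrak{R}_n(\mathcal{F})\leq C'\sqrt{D/n}\), and \(D\leq (p+2)\cdot 2\sum_jN_jd_j\) converts this to the stated form. Any logarithmic factor from a one-step Massart bound is removed by chaining.

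I expect the main obstacle to be precisely this joint covering with a clean \(\sqrt{\sum_j N_jd_j}\) and no log factor. It requires checking the Lipschitz constant of the bilinear map \((\widehat{\mathbf{U}},\mathbf{w})\mapsto f\) on the bounded set, and making sure the chaining integral converges to a constant that depends only on the bounds and not on \(n\).
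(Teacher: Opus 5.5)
Your proposal is correct and follows the same route as the paper: a uniform Rademacher-complexity bound for the $[0,1]$-valued loss, Lipschitz contraction, absorption of the intercept and numerical-covariate contributions as $O(n^{-1/2})$ terms (using $\sum_j N_j d_j\geq 1$), and a finite-dimensional parametric-class bound for the embedding part. The differences are cosmetic and affect only the constant $C$: you make explicit, via a joint sup-norm covering of all $D=p+1+\sum_j(N_j+1)d_j$ parameters and Dudley chaining, the step the paper simply invokes as the ``standard'' bound for bounded Lipschitz parametric classes, and you use the unshifted contraction with constant $L_\ell$ instead of the paper's shifted loss with $2L_\ell$.
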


\begin{proof}
For each \(f\in\mathcal{F}\), define the shifted loss
\[
\widetilde{\ell}_f(\mathbf{x},y)
=
\ell(y,f(\mathbf{x}))-\ell(y,0).
\]
By a standard generalization bound for Lipschitz losses \citep{bartlett2002rademacher}, it follows that, with probability at least \(1-\delta\), every \(f\in\mathcal{F}\) satisfies
\[
R(f)
\leq
\widehat{R}(f)
+
\widehat{\mathfrak{R}}_n(\widetilde{\ell}\circ\mathcal{F})
+
\sqrt{\frac{8\log(2/\delta)}{n}},
\]
where
\[
\widetilde{\ell}\circ\mathcal{F}
=
\bigl\{
(\mathbf{x},y)\mapsto \ell(y,f(\mathbf{x}))-\ell(y,0): f\in\mathcal{F}
\bigr\},
\]
and \(\widehat{\mathfrak{R}}_n(\widetilde{\ell}\circ\mathcal{F})\) denotes its empirical Rademacher complexity \citep{Lei2016}.

Now, for each fixed \(y\), the map
\[
u\mapsto \ell(y,u)-\ell(y,0),
\]
is \(L_\ell\)-Lipschitz and vanishes at the origin. Therefore, by the contraction property for Rademacher complexity, we obtain
\[
\widehat{\mathfrak{R}}_n(\widetilde{\ell}\circ\mathcal{F})
\leq
2L_\ell\,\widehat{\mathfrak{R}}_n(\mathcal{F}).
\]
Thus,
\[
R(f)
\leq
\widehat{R}(f)
+
2L_\ell\,\widehat{\mathfrak{R}}_n(\mathcal{F})
+
\sqrt{\frac{8\log(2/\delta)}{n}}.
\]

It therefore remains to bound \(\widehat{\mathfrak{R}}_n(\mathcal{F})\). By definition,
\[
\widehat{\mathfrak{R}}_n(\mathcal{F})
=
\mathbb{E}_{\boldsymbol{\sigma}}
\left[
\sup_{f\in\mathcal{F}}
\frac{2}{n}\sum_{i=1}^n \sigma_i f(\mathbf{x}_i)
\right],
\]
where \(\sigma_1,\ldots,\sigma_n\) are independent Rademacher random variables \citep{book2011}. Since
\[
f(\mathbf{x}_i)=\mathbf{w}^\top\boldsymbol{\phi}(\mathbf{x}_i)+b,
\]
we have
\[
\widehat{\mathfrak{R}}_n(\mathcal{F})
=
\mathbb{E}_{\boldsymbol{\sigma}}
\left[
\sup_{\|\mathbf{w}\|_2\leq B_w,\; |b|\leq B_b,\;\|\widehat{\mathbf{U}}^{(j)}\|_F\leq B_j}
\left\{
\mathbf{w}^\top
\left(
\frac{2}{n}\sum_{i=1}^n \sigma_i \boldsymbol{\phi}(\mathbf{x}_i)
\right)
+
b\left(\frac{2}{n}\sum_{i=1}^n \sigma_i\right)
\right\}
\right].
\]
Using the bound \(|b|\leq B_b\), this gives
\[
\widehat{\mathfrak{R}}_n(\mathcal{F})
\leq
\mathbb{E}_{\boldsymbol{\sigma}}
\left[
\sup_{\|\mathbf{w}\|_2\leq B_w,\;\|\widehat{\mathbf{U}}^{(j)}\|_F\leq B_j}
\mathbf{w}^\top
\left(
\frac{2}{n}\sum_{i=1}^n \sigma_i \boldsymbol{\phi}(\mathbf{x}_i)
\right)
\right]
+
B_b\,
\mathbb{E}_{\boldsymbol{\sigma}}
\left|
\frac{2}{n}\sum_{i=1}^n \sigma_i
\right|.
\]
Since
\[
\mathbb{E}_{\boldsymbol{\sigma}}
\left|
\frac{2}{n}\sum_{i=1}^n \sigma_i
\right|
\leq
\frac{2}{\sqrt n},
\]
the intercept contributes only an \(O(n^{-1/2})\) term. Applying the Cauchy--Schwarz inequality to the first term yields
\[
\widehat{\mathfrak{R}}_n(\mathcal{F})
\leq
B_w\,
\mathbb{E}_{\boldsymbol{\sigma}}
\left[
\sup_{\|\widehat{\mathbf{U}}^{(j)}\|_F\leq B_j}
\left\|
\frac{2}{n}\sum_{i=1}^n \sigma_i \boldsymbol{\phi}(\mathbf{x}_i)
\right\|_2
\right]
+
\frac{2B_b}{\sqrt n}.
\]

The feature map contains the fixed numerical part \(\mathbf{z}(\mathbf{x})\) and the embedding-dependent part. Since \(\|\mathbf{z}(\mathbf{x})\|_2\leq B_z\) and the numerical dimension \(p\) is fixed, the contribution of the numerical covariates is also of order \(O(n^{-1/2})\) and can be absorbed into the constant. The remaining dimension-dependent contribution comes from the embedding parameters.

It remains to relate this term to the number of free embedding parameters. For predictor \(j\), the learned embedding matrix \(\widehat{\mathbf{U}}^{(j)}\) has size \(N_j\times d_j\), and therefore contributes \(N_jd_j\) trainable parameters. Hence, the embedding-dependent part of the model is parameterized by $
D_{\mathrm{emb}}=\sum_{j=1}^m N_jd_j
$
free embedding parameters. Under the Frobenius norm constraints \(\|\widehat{\mathbf{U}}^{(j)}\|_F\leq B_j\), these parameters lie in a bounded subset of a \(D_{\mathrm{emb}}\)-dimensional Euclidean space. Moreover, the constraints \(\|\mathbf{w}\|_2\leq B_w\), \(|b|\leq B_b\), and \(\|\mathbf{z}(\mathbf{x})\|_2\leq B_z\) imply that the induced finite-dimensional parametric class is uniformly bounded and Lipschitz, with constants independent of \(D_{\mathrm{emb}}\). Therefore, applying the standard Rademacher complexity bound for bounded Lipschitz finite-dimensional parametric classes gives
\[
\widehat{\mathfrak{R}}_n(\mathcal{F})
\leq
C_1
\sqrt{
\frac{
\sum_{j=1}^m N_j d_j
}{n}
}
,
\]
for some constant \(C_1>0\), where \(C_1\) absorbs the bounded intercept term, the fixed numerical covariate contribution, and the norm bounds on the representation matrices.

Substituting this bound into the preceding inequality gives
\[
R(f)
\leq
\widehat{R}(f)
+
2L_\ell C_1 
\sqrt{
\frac{
\sum_{j=1}^m N_j d_j
}{n}
}
+
\sqrt{\frac{8\log(2/\delta)}{n}}.
\]
Setting \(C=2L_\ell C_1\) yields Equation~\eqref{eq2}.
\end{proof}

\subsection{Embedding-dimension allocation under a resource constraint}

Together with Proposition~\ref{prop:approximation}, Theorem~\ref{thm:generalization} formalizes the approximation--estimation tradeoff induced by the embedding dimensions. Larger dimensions can reduce approximation error by capturing more latent category structure, but they also increase the number of learned embedding parameters and hence the statistical complexity of the function class. Thus, overly large embedding dimensions may lead to overfitting and weaker out-of-sample performance in finite samples. This tradeoff naturally motivates the problem of how to allocate a fixed embedding budget across categorical predictors. 

The allocation problem considered here is closely related in spirit to the OCBA literature. In OCBA, a limited computing or simulation budget is allocated across competing alternatives in order to improve selection efficiency, typically by increasing the probability of correct selection. Similarly, our framework allocates a fixed budget across categorical predictors that compete for limited representational capacity. The key difference is that the allocated resource is not simulation replications or computing time, but embedding capacity. Thus, the proposed formulation can be viewed as an embedding-capacity analogue of budget allocation ideas from stochastic simulation optimization.
The following theorem derives a closed-form allocation rule for an approximation-based surrogate problem under a fixed embedding budget.

\begin{theorem}
\label{thm:allocation}
Suppose that the following surrogate risk criterion is used to approximate the approximation--estimation tradeoff
\[
R(f)
=
R^\star
+
\sum_{j=1}^m \varepsilon_{\mathrm{approx}}^{(j)}(d_j)
+
C\sqrt{\frac{\sum_{j=1}^m N_j d_j}{n}},
\]
where \(R^\star\) denotes the irreducible risk, \(\varepsilon_{\mathrm{approx}}^{(j)}(d_j)\) is the approximation error associated with predictor \(j\), and the last term represents the estimation error. To obtain an explicit allocation rule, we adopt the working model
\[
\varepsilon_{\mathrm{approx}}^{(j)}(d_j)=\frac{a_j}{d_j},
\qquad
a_j>0,
\]
where \(a_j\) is a predictor-specific approximation parameter. 

Consider the corresponding approximation-based surrogate allocation problem
\[
\min_{d_1,\ldots,d_m>0}
\sum_{j=1}^m \frac{a_j}{d_j}
\qquad
\text{subject to}
\qquad
\sum_{j=1}^m N_j d_j \leq B,
\]
where \(N_j\) is the number of categories in predictor \(j\) and \(B\) is a fixed embedding budget. Then the solution of this surrogate allocation problem is given by
\[
d_j^\star \propto \sqrt{\frac{a_j}{N_j}},
\]
and the proportionality constant is determined by
\[
\sum_{j=1}^m N_j d_j^\star = B.
\]
In particular,
\[
d_j^\star
=
\frac{B\,\sqrt{a_j/N_j}}{\sum_{k=1}^m \sqrt{a_k N_k}},
\qquad j=1,\ldots,m.
\]
\end{theorem}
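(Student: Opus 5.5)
The surrogate problem is a smooth convex program, so I would solve it through its KKT conditions. On the open orthant $d_j>0$, each map $d_j\mapsto a_j/d_j$ is strictly convex because its second derivative is $2a_j/d_j^3>0$. The constraint $\sum_j N_j d_j\le B$ is linear, and Slater's condition holds trivially, for instance at $d_j=B/(2mN_j)$. Hence the KKT conditions are both necessary and sufficient, and strict convexity makes any minimizer unique.

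The first step is to show the budget constraint binds. If $\sum_j N_jd_j<B$ at a candidate point, increasing any one $d_j$ slightly stays feasible and strictly lowers $a_j/d_j$. So every minimizer satisfies $\sum_j N_jd_j=B$, which forces the multiplier $\mu$ to be strictly positive.

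The second step is stationarity of the Lagrangian $\sum_j a_j/d_j+\mu(\sum_j N_jd_j-B)$, which gives $-a_j/d_j^2+\mu N_j=0$. Therefore $d_j=\mu^{-1/2}\sqrt{a_j/N_j}$, which is the claimed proportionality $d_j^\star\propto\sqrt{a_j/N_j}$. Substituting into the binding constraint gives $\mu^{-1/2}\sum_k N_k\sqrt{a_k/N_k}=\mu^{-1/2}\sum_k\sqrt{a_kN_k}=B$. Solving for $\mu^{-1/2}$ yields the stated closed form for $d_j^\star$. This point is strictly positive, so it lies in the feasible region.

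The one delicate issue is that the feasible set is not closed, since each $d_j>0$. Existence of a minimizer therefore does not follow from compactness, and a pure KKT argument only characterizes a minimizer once one is known to exist. I would avoid this by proving global optimality directly. By Cauchy--Schwarz, for any feasible $d$,
\[
\Bigl(\sum_j \sqrt{a_kN_k}\Bigr)^2=\Bigl(\sum_j \sqrt{a_j/d_j}\,\sqrt{N_jd_j}\Bigr)^2\le \Bigl(\sum_j a_j/d_j\Bigr)\Bigl(\sum_j N_jd_j\Bigr)\le B\sum_j a_j/d_j .
\]
This gives the lower bound $\sum_j a_j/d_j\ge(\sum_k\sqrt{a_kN_k})^2/B$.

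Equality in Cauchy--Schwarz requires $\sqrt{a_j/d_j}\propto\sqrt{N_jd_j}$, that is, $d_j\propto\sqrt{a_j/N_j}$. Equality in the second inequality requires the budget to be exhausted. Both conditions pin down exactly the displayed $d_j^\star$, so $d_j^\star$ is the unique minimizer.
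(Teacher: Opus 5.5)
Your proposal is correct, and its core stationarity computation is the paper's whole proof. The paper argues that the budget binds because the objective decreases in each $d_j$, then solves $-a_j/d_j^2+\lambda N_j=0$ to get $d_j=\sqrt{a_j/(\lambda N_j)}$, fixes $\lambda$ from the active constraint, and stops. That argument only uses necessary conditions. It shows that an interior minimizer, if one exists, must have this form, but it never verifies existence, global optimality, or uniqueness, all of which the statement's phrase ``the solution'' implicitly asserts. Your additions close exactly this gap.

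Your convexity observation already suffices on its own. For a convex program, any point satisfying the KKT conditions with a nonnegative multiplier is a global minimizer. Exhibiting $d^\star$ with $\mu=\bigl(\sum_k\sqrt{a_kN_k}/B\bigr)^2>0$ therefore certifies optimality without any compactness argument, and strict convexity gives uniqueness. Your worry that a KKT argument needs existence in advance applies only to the necessity direction, which is the one the paper relies on.

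The Cauchy--Schwarz bound is a more elementary, self-contained certificate. It also yields the optimal value $\bigl(\sum_k\sqrt{a_kN_k}\bigr)^2/B$ explicitly. Your equality analysis is right, since both vectors are strictly positive and the two inequalities must hold with equality simultaneously.

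One small typo: in the first displayed sum the index should be $k$ throughout, or the summand should read $\sqrt{a_jN_j}$.
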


\begin{proof}
The risk decomposition contains both approximation and estimation terms. Under the fixed budget constraint $\sum_{j=1}^m N_jd_j\leq B$, the estimation term $\sqrt{\sum_{j=1}^m N_j d_j/n}$ is bounded above by $\sqrt{B/n}$. The surrogate allocation problem then focuses on the approximation terms, which determine how the fixed representational capacity should be distributed across predictors. We therefore consider the following approximation-based surrogate allocation problem
\[
\min_{d_1,\ldots,d_m>0}\;
\sum_{j=1}^m \frac{a_j}{d_j}
\qquad
\text{subject to}
\qquad
\sum_{j=1}^m N_j d_j \leq B.
\]
Since the objective is decreasing in each \(d_j\), the budget constraint is active at the solution. Introduce a Lagrange multiplier \(\lambda>0\) and define
\[
\mathcal{L}(\mathbf{d},\lambda)
=
\sum_{j=1}^m \frac{a_j}{d_j}
+
\lambda\left(\sum_{j=1}^m N_j d_j-B\right),
\]
where \(\mathbf{d}=(d_1,\ldots,d_m)^\top\). The first-order conditions are
\[
\frac{\partial \mathcal{L}}{\partial d_j}
=
-\frac{a_j}{d_j^2}+\lambda N_j
=
0,
\qquad j=1,\ldots,m.
\]
Solving for \(d_j\) yields
\[
d_j=\sqrt{\frac{a_j}{\lambda N_j}}.
\]
Hence,
\[
d_j^\star \propto \sqrt{\frac{a_j}{N_j}}.
\]
The proportionality constant is determined by enforcing the active constraint
\[
\sum_{j=1}^m N_j d_j^\star = B.
\]
This yields
\[
d_j^\star
=
\frac{B\,\sqrt{a_j/N_j}}{\sum_{k=1}^m \sqrt{a_k N_k}},
\qquad j=1,\ldots,m.
\]
\end{proof}

The allocation rule should be interpreted as the solution of an approximation-based surrogate problem, rather than as the exact optimizer of the full finite sample risk. Also, the estimation term is not removed from the risk decomposition. Under a fixed budget, it is controlled by the total embedding parameter budget, so the relative allocation across predictors is driven by approximation differences. Finally, the budget \(B\) can be interpreted as a global capacity parameter governing the tradeoff between representation quality and overfitting.

In implementation, the approximation error for predictor \(j\) at dimension \(d_j\) is estimated from the empirical singular-value tail of its learned embedding matrix,
\[
\widehat{\varepsilon}_{\mathrm{approx}}^{(j)}(d_j)\propto
\left(\sum_{k>d_j}\bigl(\widehat{\sigma}_k^{(j)}\bigr)^2\right)^{1/2},
\]
and the coefficient \(\widehat a_j\) is obtained by fitting
\[
\widehat{\varepsilon}_{\mathrm{approx}}^{(j)}(d_j)\approx \frac{\widehat a_j}{d_j}.
\]
Thus, the working model \(\varepsilon_{\mathrm{approx}}^{(j)}(d_j)=a_j/d_j\) should be viewed as a tractable one-parameter summary of the empirical approximation profile, not as an exact description of all possible spectral decay patterns.

These considerations motivate the practical procedures used in the simulation study. Algorithm~\ref{alg:embedding_allocation} provides the implementable fixed-budget allocation rule associated with Theorem~\ref{thm:allocation}, with approximation coefficients estimated from empirical spectral decay as suggested by Proposition~\ref{prop:approximation}. Since the total embedding budget is not known a priori, Algorithm~\ref{alg:budget_selection} adds an outer selection step over a prespecified candidate set \( \mathcal{B} \). For each candidate budget \( B \in \mathcal{B} \), Algorithm~\ref{alg:budget_selection} calls Algorithm~\ref{alg:embedding_allocation} to obtain \( \widehat{\mathbf d}\), trains the corresponding model, and evaluates its validation performance. The selected budget \( \widehat{B} \) is then combined with Algorithm~\ref{alg:embedding_allocation} on the full training sample to produce the final dimension vector \( \widehat{\mathbf d} \).

\begin{algorithm}[t]\scriptsize
\caption{Embedding-dimension allocation under a budget constraint}
\label{alg:embedding_allocation}
\textbf{Require:} Training data \(\{(\mathbf{x}_i,y_i)\}_{i=1}^n\), categorical predictors \(j=1,\ldots,m\), numbers of categories \(N_j\), total resource budget \(B\geq \sum_{j=1}^m N_j\), initial dimensions \(\bar d_j\), regularization parameters, and positivity constant \(\eta>0\).

\begin{enumerate}
    \item \textbf{Initial estimation.}
    For each categorical predictor \(j\), choose an initial dimension \(\bar d_j\) and estimate the embedding-based predictive model
    \[
    \widehat f(\mathbf{x})
    =
    \widehat{\mathbf w}^{\top}\widehat{\boldsymbol\phi}(\mathbf{x})+\widehat b,
    \qquad
    \widehat{\boldsymbol\phi}(\mathbf{x})
    =
    \Big[
    \mathbf z(\mathbf{x})^\top,\,
    \widehat{\mathbf u}_1(c_1(\mathbf{x}))^\top,\,
    \ldots,\,
    \widehat{\mathbf u}_m(c_m(\mathbf{x}))^\top
    \Big]^\top .
    \]
    
    \item \textbf{Estimation of approximation coefficients.}
    For each predictor \(j\), construct the learned embedding matrix
    \[
    \widehat{\mathbf{U}}_{\bar d_j}^{(j)}  \in \mathbb{R}^{N_j \times \bar d_j},
    \]
    and compute its singular values
    \[
    \widehat{\sigma}_1^{(j)} \geq \widehat{\sigma}_2^{(j)} \geq \cdots \geq \widehat{\sigma}_{\bar d_j}^{(j)}.
    \]
    Using the empirical spectral decay, estimate the approximation coefficient in the working model
    \[
    \widehat\varepsilon_{\mathrm{approx}}^{(j)}(d_j) \approx \frac{\widehat a_j}{d_j},
    \]
    and denote the resulting estimate by \(\widehat a_j\). Set
    \[
    \widehat a_j \leftarrow \max\{\widehat a_j,\eta\},
    \]
    for a small fixed constant \(\eta>0\), so that all estimated approximation coefficients are strictly positive.

    \item \textbf{Continuous dimension allocation.}
    Solve
    \[
    \min_{d_1,\ldots,d_m}
    \sum_{j=1}^m \frac{\widehat{a}_j}{d_j}
    \qquad
    \text{s.t.}
    \qquad
    \sum_{j=1}^m N_j d_j \leq B,
    \quad d_j>0.
    \]
    The solution is
    \[
    \widetilde{d}_j
    =
    \frac{B\sqrt{\widehat{a}_j/N_j}}
    {\sum_{k=1}^m \sqrt{\widehat{a}_k N_k}},
    \qquad j=1,\ldots,m.
    \]

    \item \textbf{Integer projection.}
    Set
    \[
    d_j'=\max\{1,\lfloor \widetilde{d}_j \rfloor\},
    \qquad j=1,\ldots,m.
    \]
    If the rounded allocation does not satisfy
    \[
    \sum_{j=1}^m N_j d_j' \leq B,
    \]
    project the rounded vector onto the feasible budget set. Any remaining budget is then allocated greedily to the predictors with the largest marginal reduction per unit resource cost
    \[
    \frac{\Delta_j(d_j')}{N_j}
    =
    \frac{1}{N_j}
    \left(
    \frac{\widehat a_j}{d_j'}
    -
    \frac{\widehat a_j}{d_j'+1}
    \right),
    \]
    until no further dimension can be added without violating the budget constraint.

    \item \textbf{Final estimation.}
    Re-estimate the predictive model
    \[
    \widehat f(\mathbf{x})
    =
    \widehat{\mathbf w}^{\top}\widehat{\boldsymbol\phi}(\mathbf{x})
    +
    \widehat b,
    \]
    using the selected dimensions \(d_1',\ldots,d_m'\).

    \item \textbf{Return} \(d_1',\ldots,d_m'\).
\end{enumerate}
\end{algorithm}

\begin{algorithm}[t]\scriptsize
\caption{Data-driven selection of the total embedding budget}
\label{alg:budget_selection}
\begin{algorithmic}
\Require Dataset \(\mathcal{D}\), candidate budget set \(\mathcal{B}\), number of inner folds \(K\), number of repetitions \(R\), allocation subroutine \(\mathcal{A}(\mathcal{D}_{\mathrm{tr}}, B)\), training procedure \(\mathcal{T}(\mathcal{D}_{\mathrm{tr}}, \mathbf{d})\), validation loss function \(L_{\mathrm{val}}(\cdot,\cdot)\).
\Ensure Selected total budget \(\widehat{B}\) and associated dimension vector \(\widehat{\mathbf{d}}\).

\For{each \(B \in \mathcal{B}\)}
    \State Initialize score list \(\mathcal{S}_B \leftarrow \emptyset\).
    \For{\(r = 1,\dots,R\)}
        \State Randomly partition \(\mathcal{D}\) into \(K\) folds.
        \For{\(k = 1,\dots,K\)}
            \State Define training split \(\mathcal{D}_{\mathrm{tr}}^{(r,k)}\) and validation split \(\mathcal{D}_{\mathrm{val}}^{(r,k)}\).
            \State Compute \(\widehat{\mathbf{d}}^{(r,k)}(B) \leftarrow \mathcal{A}(\mathcal{D}_{\mathrm{tr}}^{(r,k)}, B)\).
            \State Train model \(f^{(r,k)}_{B} \leftarrow \mathcal{T}(\mathcal{D}_{\mathrm{tr}}^{(r,k)},
            \widehat{\mathbf{d}}^{(r,k)}(B))\).
            \State Evaluate validation loss
            \[
            s^{(r,k)}_{B} \leftarrow L_{\mathrm{val}}\!\left(f^{(r,k)}_{B}, \mathcal{D}_{\mathrm{val}}^{(r,k)}\right).
            \]
            \State Append \(s^{(r,k)}_{B}\) to \(\mathcal{S}_B\).
        \EndFor
    \EndFor
    \State Compute mean validation loss
    \[
    \overline{L}(B) \leftarrow \frac{1}{|\mathcal{S}_B|} \sum_{s \in \mathcal{S}_B} s.
    \]
    \State Compute standard error
    \[
    \mathrm{SE}(B) \leftarrow \frac{\operatorname{sd}(\mathcal{S}_B)}{\sqrt{|\mathcal{S}_B|}}.
    \]
\EndFor

\State Let
\[
B_{\min} \in \arg\min_{B \in \mathcal{B}} \overline{L}(B).
\]

\State Apply the one-standard-error rule and choose
\[
\widehat{B} \leftarrow \min \left\{ B \in \mathcal{B} \,\middle|\, \overline{L}(B) \leq \overline{L}(B_{\min}) + \mathrm{SE}(B_{\min}) \right\}.
\]

\State Compute final allocation on the full dataset
\[
\widehat{\mathbf{d}} \leftarrow \mathcal{A}(\mathcal{D}, \widehat{B}).
\]

\State \Return \(\widehat{B}, \widehat{\mathbf{d}}\).
\end{algorithmic}
\end{algorithm}

\section{Simulation Study}\label{sec:results}

In this section, we present simulation evidence for the main theoretical results of Section~\ref{sec:method}. Specifically, the following three experiments examine the spectral characterization of approximation error, the finite-sample approximation--estimation tradeoff, and the performance of the proposed allocation rule under fixed embedding budget constraints.

Throughout, we consider categorical predictors with latent low-rank structure. For a single predictor with \(N\) categories, let \(\mathbf{U}\in\mathbb{R}^{N\times r}\) denote the latent category matrix, where the \(c\)-th row \(\mathbf{u}_c\in\mathbb{R}^r\) represents the latent embedding vector of category \(c\). We generate
\[
\mathbf{U}=\mathbf{Q}_1\boldsymbol{\Sigma}\mathbf{Q}_2^\top,
\]
where \(\boldsymbol{\Sigma}=\operatorname{diag}(\sigma_1,\ldots,\sigma_r)\) contains singular values following either polynomial decay, \(\sigma_k\propto k^{-\alpha}\), or exponential decay, \(\sigma_k\propto e^{-\alpha k}\).

The latent matrix \(\mathbf{U}\) is not directly observed. Instead, for each category \(c\in\{1,\ldots,N\}\), we generate noisy samples
\begin{equation}\label{eq3}
\mathbf{X}_{c,t}=\mathbf{u}_c+\boldsymbol{\varepsilon}_{c,t},
\qquad
\boldsymbol{\varepsilon}_{c,t}\sim\mathcal{N}(\mathbf{0},\sigma^2\mathbf{I}), \qquad c=1,...,N,
\end{equation}
and define the empirical embedding matrix \(\widehat{\mathbf U}\) from the corresponding category means
\begin{equation}\label{eq4}
\widehat{\mathbf u}_c=\frac{1}{n_c}\sum_{t=1}^{n_c}\mathbf{X}_{c,t}, \quad c=1,...,N.
\end{equation}

The category counts \(n_c\) are generated either uniformly or from a Dirichlet--multinomial distribution, thereby inducing heterogeneous estimation noise across categories. For a given embedding dimension \(d\), let \(\widehat{\mathbf U}^{(d)}\) denote the rank-\(d\) approximation of \(\widehat{\mathbf U}\) obtained by truncated singular value decomposition. 

\subsection{Approximation Error Under Finite-Sample Estimation}
\label{subsec:exp1}

This experiment evaluates the approximation error characterization in Proposition \ref{prop:approximation} under finite-sample conditions. The objective is to assess whether the singular-value spectrum of an estimated embedding matrix provides an accurate proxy for approximation error in a setting where the latent structure is not directly observed.

We consider a categorical predictor with $N$ categories and an underlying latent representation matrix $\mathbf{U} \in \mathbb{R}^{N \times r}$. The matrix $\mathbf{U}$ is constructed as $\mathbf{U} = \mathbf{Q}_1 \boldsymbol{\Sigma} \mathbf{Q}_2^\top$, where $\mathbf{Q}_1 \in \mathbb{R}^{N \times r}$ and $\mathbf{Q}_2 \in \mathbb{R}^{r \times r}$ are orthonormal matrices obtained from QR decompositions of Gaussian random matrices, and $\boldsymbol{\Sigma} = \mathrm{diag}(\sigma_1,\dots,\sigma_r)$ contains the singular values. The spectral design is specified to follow either polynomial or exponential decay, thereby inducing different levels of intrinsic dimensionality. For each category $c$, we generate independent observations using the form provided by Equation (\ref{eq3}), i.e.
\[
\mathbf{X}_{c,t} = \mathbf{u}_c + \boldsymbol{\varepsilon}_{c,t}, \quad \boldsymbol{\varepsilon}_{c,t} \sim \mathcal{N}(\mathbf{0}, \sigma^2 \mathbf{I}_r), \quad c=1,...,N,
\]
with $\sigma=1$, where $\mathbf{u}_c$ denotes the $c$-th row of $\mathbf{U}$. The number of observations per category is random and drawn from a Dirichlet-multinomial distribution. It is evident that for the purposes of this experiment we consider the existence of $1$ categorical predictor. The rows of the empirical embedding matrix $\widehat{\mathbf{U}}$ are computed according to the sample means shown in Equation (\ref{eq4}).

For each embedding dimension \(d\in\{1,\ldots,r\}\), we compute the rank-\(d\) approximation
\(\widehat{\mathbf U}^{(d)}\) of the population latent matrix \(\mathbf U\), and measure the 
approximation error using the quantity \(\|\mathbf U-\widehat{\mathbf U}^{(d)}\|_F\).
This is compared with the empirical spectral-tail proxy
\[
\left(\sum_{k>d}\widehat{\sigma}_k^2\right)^{1/2},
\]
where \(\widehat{\sigma}_k\) denotes the singular values of the estimated matrix
\(\widehat{\mathbf U}\). For each spectral design, we performed 150 Monte Carlo simulations and all reported quantities are averaged across simulations.

Figure~\ref{fig:exp1_main} presents the approximation error profiles across the considered spectral decay regimes. In all cases, the approximation error decreases with the embedding dimension, with the rate of decrease depending on the underlying singular-value decay. Under exponential decay, the error falls rapidly, whereas under polynomial decay the decline is more gradual, reflecting a more diffuse latent structure. Overall, Figure~\ref{fig:exp1_main} shows that the empirical singular-value tail follows the same qualitative pattern as the population approximation error profile. Figure~\ref{fig:exp1_gap} further plots the difference between the empirical spectral-tail proxy and the population approximation error. The gap remains small and decreases toward negligible values as the embedding dimension increases. Hence, despite finite-sample noise, the empirical spectrum of \(\widehat{\mathbf U}\) provides a useful proxy for approximation error.

Figure~\ref{fig:exp1_loglog} further supports the representation-level approximation interpretation used in Proposition~\ref{prop:approximation}. Across the three spectral regimes, the predictive error induced by rank-\(d\) representations decreases as the embedding dimension increases, following the same qualitative pattern as the population and empirical spectral tails. The results indicate that the spectral-tail quantity contains useful information about the predictive loss incurred when the latent category representation is compressed to rank \(d\).
The agreement should be interpreted as directional rather than exact. The spectral tail measures the total residual variation left outside the rank-\(d\) representation, whereas predictive error depends only on the components of this residual variation that are relevant for the response. Therefore, the predictive error may decrease more rapidly than the spectral tail when the main predictive signal is concentrated in the leading singular directions.

The right panels provide an additional diagnostic for the working approximation model used in Theorem~\ref{thm:allocation}. The fitted decay curves capture the main empirical approximation trends across the considered spectral regimes, although the fit is not exact in all cases. This supports the use of \(\widehat{\varepsilon}_{\mathrm{approx}}(d)\approx \widehat{a}/d\) as a tractable summary of the empirical approximation profile. The fitted coefficient \(\widehat{a}\) should therefore be interpreted as a data-driven allocation score, rather than as an exact population parameter or a universal spectral decay law.

\begin{figure}[t]
    \centering
    \includegraphics[width=1\textwidth]{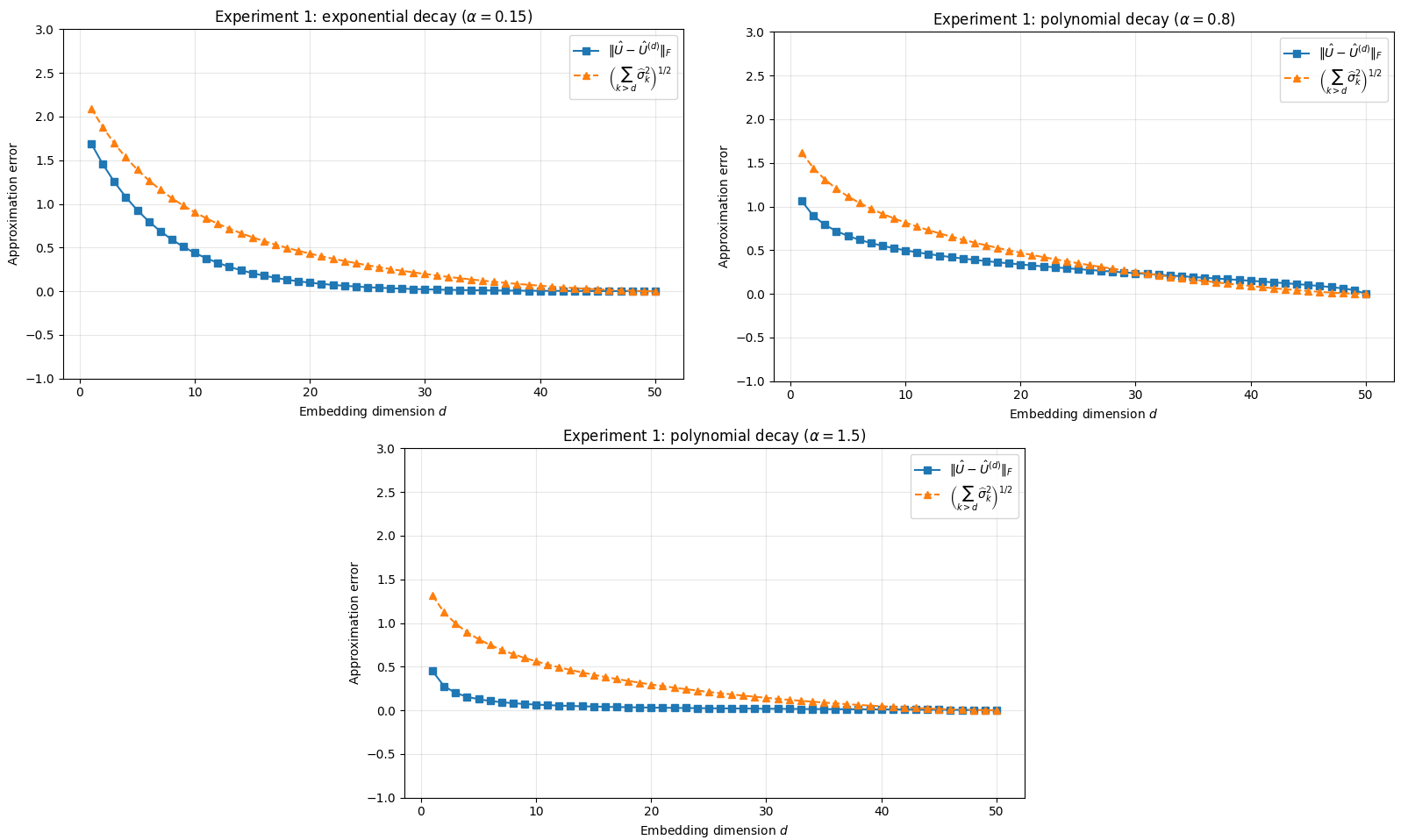}
    \caption{\small Approximation error as a function of embedding dimension $d$ under different spectral decay regimes.}
    \label{fig:exp1_main}
\end{figure}

\begin{figure}[t]
    \centering
    \includegraphics[width=1\textwidth]{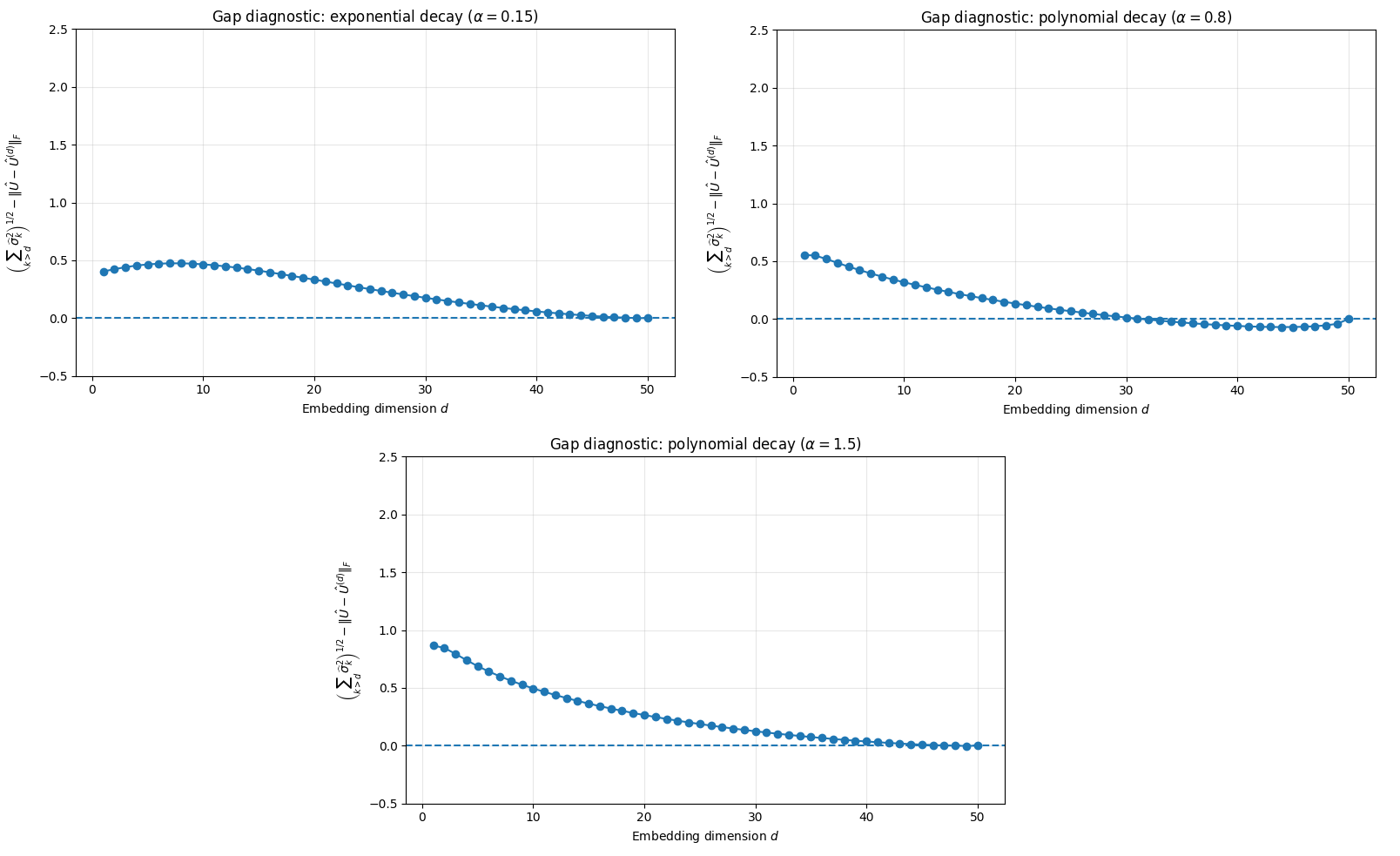}
    \caption{\small Gap diagnostic for Experiment 1. The figure plots the difference between the empirical spectral-tail proxy and the population approximation error across embedding dimensions.}
    \label{fig:exp1_gap}
\end{figure}

\begin{figure}[t]
    \centering
    \includegraphics[width=0.7\textwidth]{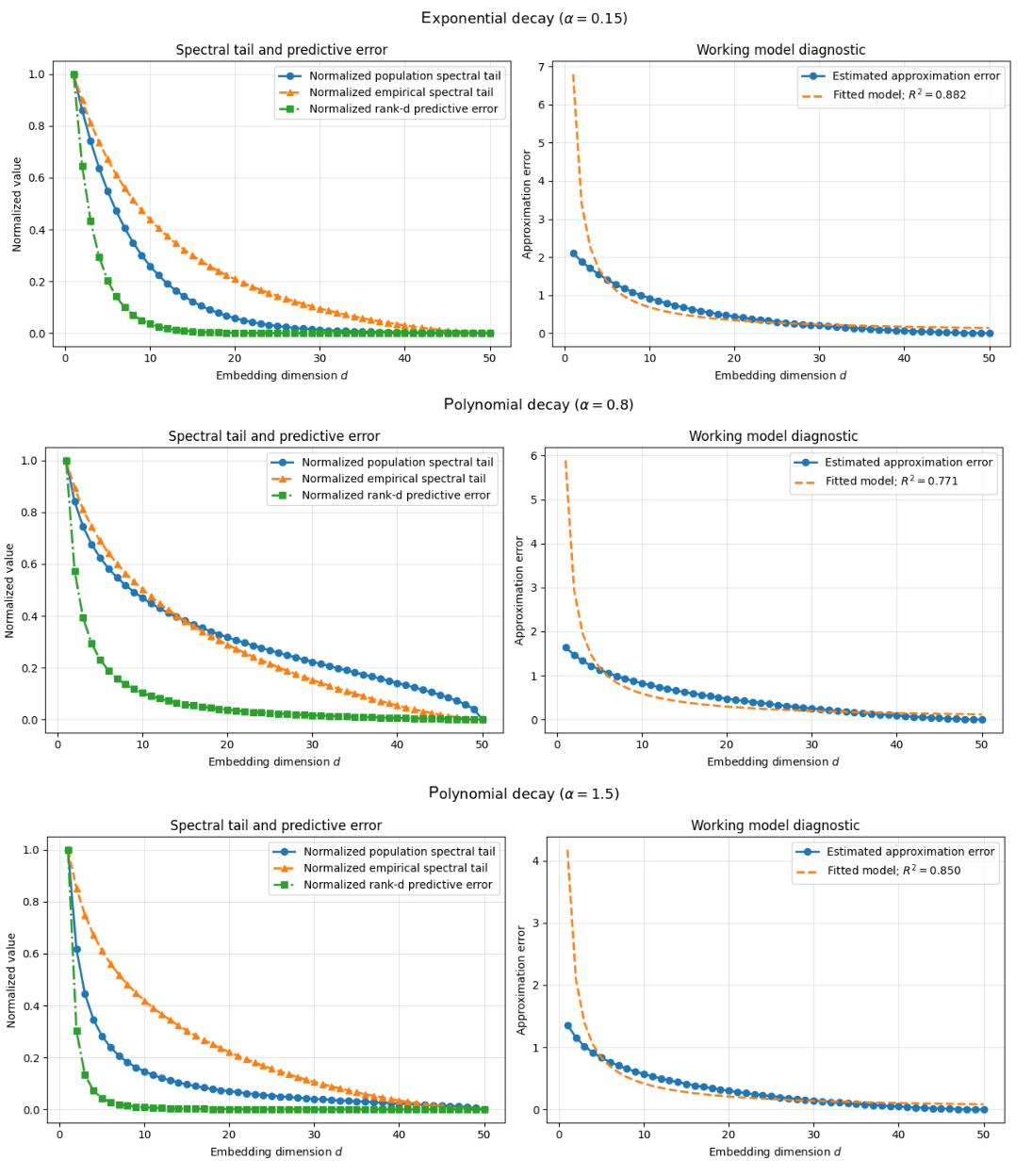}
    \caption{Predictive and approximation diagnostics under alternative spectral decay regimes. The left panels compare the population and empirical spectral tails with the predictive error induced by rank-\(d\) representations. The right panels compare the estimated approximation error with the fitted decay model \(\widehat{\varepsilon}_{\mathrm{approx}}(d)\approx \widehat a/d\).}
    \label{fig:exp1_loglog}
\end{figure}

\subsection{Approximation--Estimation tradeoff}
\label{subsec:exp2}

The second experiment examines the finite-sample tradeoff underlying Theorem \ref{thm:generalization}. The objective is to quantify how the choice of embedding dimension affects predictive performance when richer representations reduce approximation bias but simultaneously increase estimation error. To this end, we generate a latent matrix $\mathbf{U} \in \mathbb{R}^{N \times r}$ with controlled singular-value decay and construct noisy category-level observations from which an estimated embedding matrix is learned. In this experiment, we consider a single categorical predictor. For each category $c$, training observations are generated according to Equation (\ref{eq3}) with $\sigma=1$. An independent test sample is generated analogously. Category frequencies in the training sample are heterogeneous and drawn from a Dirichlet-multinomial model, so that estimation precision varies across categories. For each embedding dimension $d$, we compute the rank-$d$ approximation $\widehat{\mathbf{U}}^{(d)}$ of the estimated matrix $\widehat{\mathbf{U}}$ by truncated singular value decomposition and evaluate both in-sample and out-of-sample prediction error. The rows of  $\widehat{\mathbf{U}}$ are defined based on Equation (\ref{eq4}). Finally, we performed 150 Monte Carlo replications for each training sample size and all reported results are averaged across simulations.

Figure~\ref{fig:exp2_train_test} reports training and test mean squared error (MSE) as a function of the embedding dimension for three sample sizes. In all cases, training error declines monotonically with $d$, reflecting the increasing flexibility of the representation. Test error, however, exhibits the finite-sample tradeoff predicted by the theory. For the smallest sample size, the out-of-sample error reaches its minimum at a very low dimension and then increases steadily, indicating that estimation error rapidly dominates the gains from additional representation capacity. As the sample size increases, the location of the minimum shifts to the right and the deterioration beyond the optimum becomes less pronounced. This pattern is consistent with the theoretical prediction that the admissible embedding complexity expands with the amount of available data.

The underlying mechanism is made explicit in Figure~\ref{fig:exp2_optimal_d}, which decomposes the total reconstruction error into an approximation and an estimation error. The approximation error decreases with $d$, since larger embeddings capture a greater share of the latent structure. By contrast, the estimation error increases with $d$, reflecting the larger number of parameters that must be learned from finite samples. Their interaction produces the non-monotone profile of the total reconstruction error. The tradeoff is strongest in the smallest-sample regime and becomes flatter as the sample size grows, which explains the corresponding behavior of test performance in Figure~\ref{fig:exp2_train_test}.

Figure~\ref{fig:exp2_decomp} summarizes this pattern by plotting the empirically optimal embedding dimension against the training sample size. The selected dimension increases monotonically with $n$, providing evidence that larger samples support richer embedding representations. Taken together, the results provide strong empirical support for Theorem \ref{thm:generalization}. They show that embedding dimension should be chosen by balancing approximation gains against finite-sample estimation costs and that the optimal choice is inherently sample-size dependent.

\begin{figure}[t]
    \centering
    \includegraphics[width=1\textwidth]{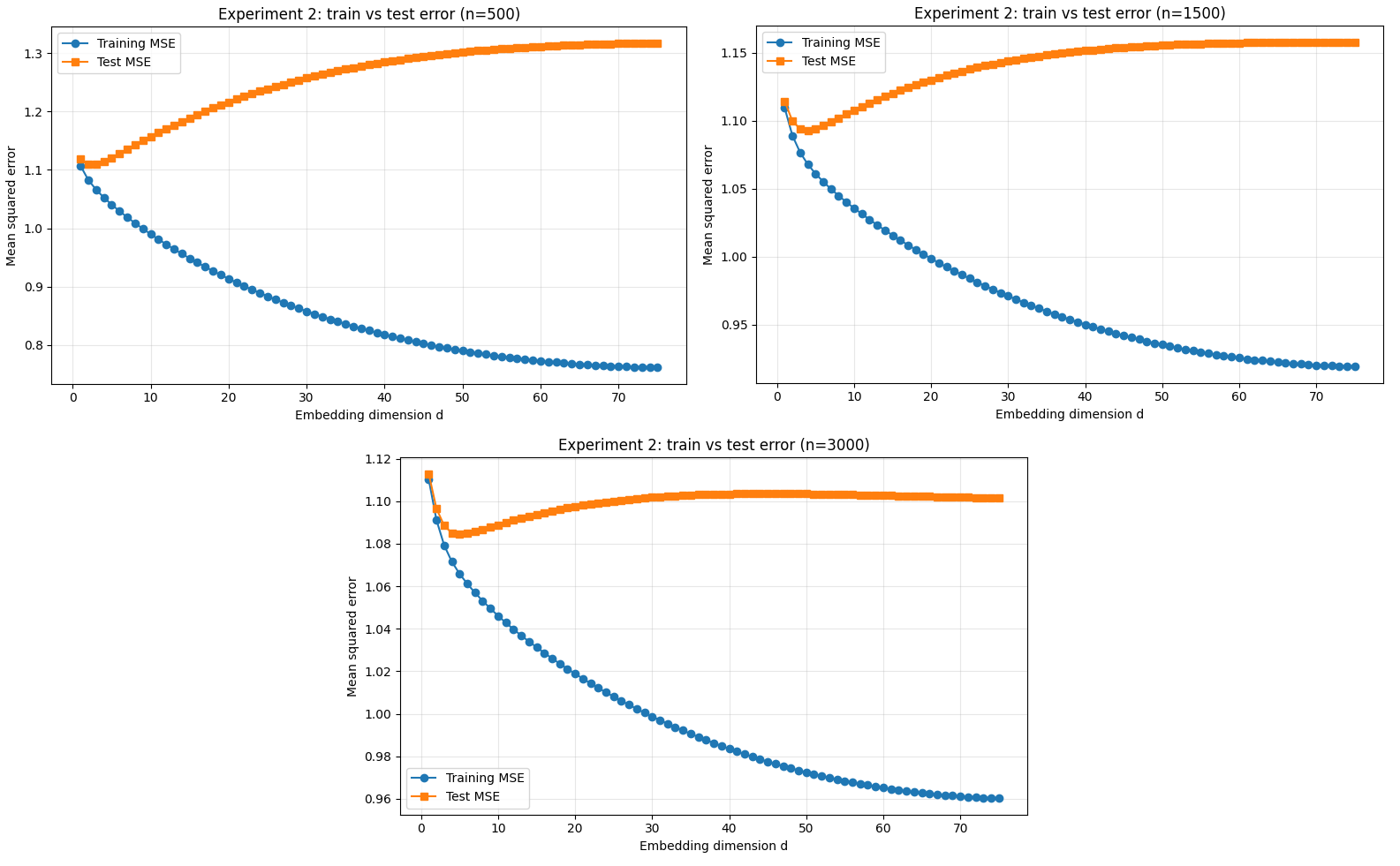}
    \caption{\small Training and test MSE as a function of embedding dimension for different training sample sizes.}
    \label{fig:exp2_train_test}
\end{figure}

\begin{figure}[t]
    \centering
    \includegraphics[width=1\textwidth]{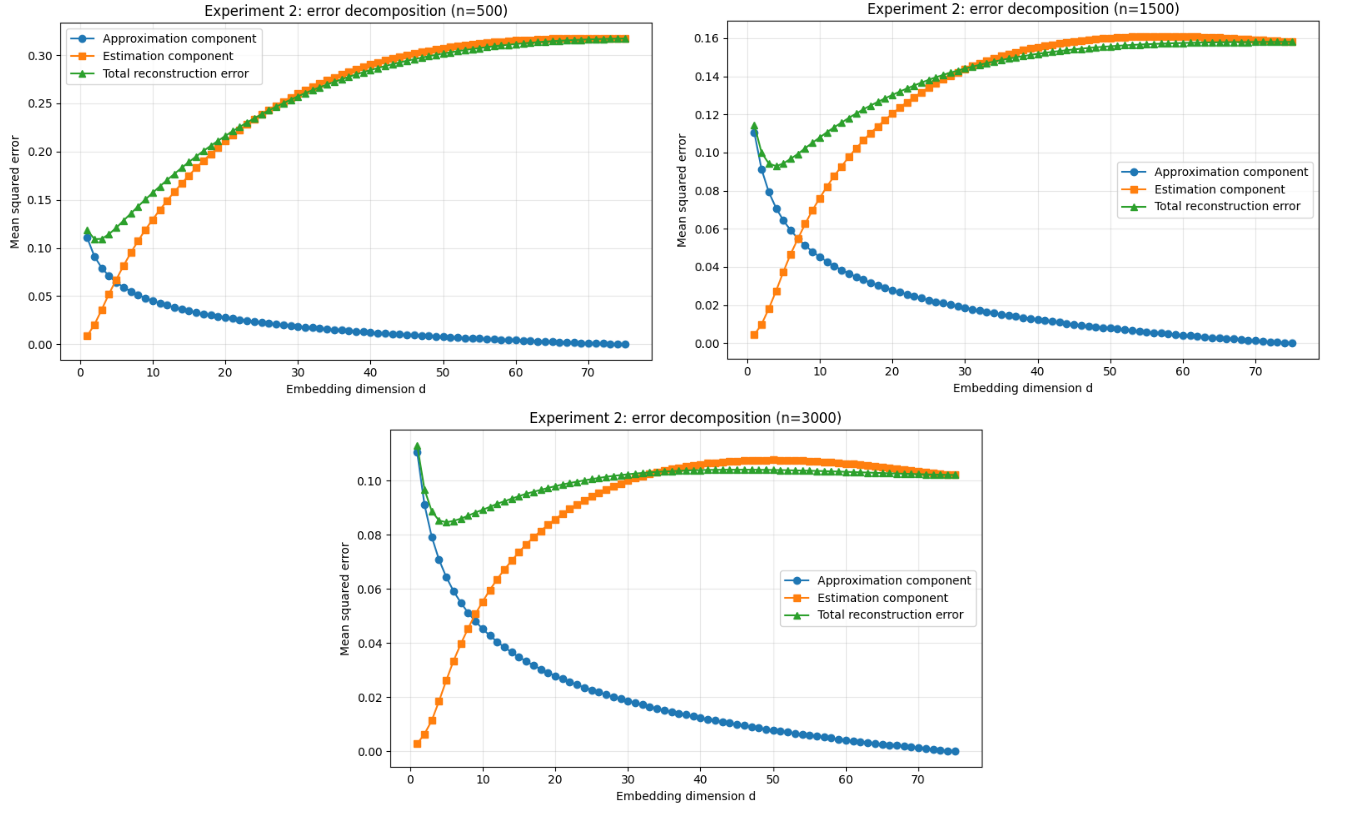}
    \caption{\small Decomposition of reconstruction error into approximation and estimation components for different training sample sizes.}
    \label{fig:exp2_optimal_d}
\end{figure}

\begin{figure}[t]
    \centering
    \includegraphics[width=0.7\textwidth]{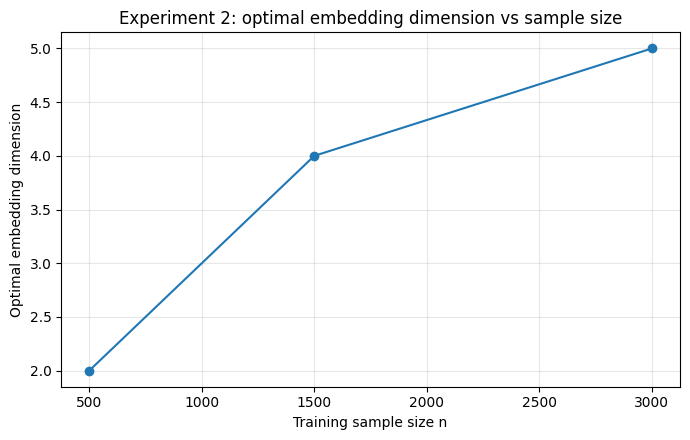}
    \caption{\small Empirically optimal embedding dimension as a function of training sample size.}
    \label{fig:exp2_decomp}
\end{figure}

\subsection{Evaluation of the Allocation Rule under a Fixed Embedding Budget}\label{subsec:exp3}

We evaluate the allocation rule developed in Section \ref{sec:method} in a multi-predictor setting under a fixed embedding-budget constraint. The objective is to quantify the effect of dimension allocation on out-of-sample performance and to assess whether the proposed law improves upon standard allocation heuristics in finite-sample regimes.

We consider \(m=3\) categorical predictors. For each predictor \(j\), let \(\mathbf{U}^{(j)} \in \mathbb{R}^{N_j \times r_j}\) denote the population latent matrix introduced in Section \ref{sec:method}. In the simulation design, \(\mathbf{U}^{(j)}\) is generated as
\[
\mathbf{U}^{(j)} = \mathbf{Q}_{1j}\,\mathrm{diag}(\sigma_{j1},\ldots,\sigma_{jr_j})\,\mathbf{Q}_{2j}^\top,
\]
where \(\mathbf{Q}_{1j} \in \mathbb{R}^{N_j \times r_j}\) has orthonormal columns, \(\mathbf{Q}_{2j} \in \mathbb{R}^{r_j \times r_j}\) is orthogonal, and the singular values \(\sigma_{jk}\) follow either polynomial or exponential decay, namely $
\sigma_{1k} \propto k^{-0.35}$, $\sigma_{2k} \propto k^{-1.5}$ and $\sigma_{3k} \propto e^{-0.3k}$. This construction induces heterogeneous spectral structures across predictors. Moreover, we set $N_1=30$, $N_2=90$ and $N_3=180$.

For each category \(c \in \{1,\ldots,N_j\}\), we observe noisy samples of the latent vector
\[
\mathbf{X}_{j,c,\ell} = \mathbf{u}_j(c) + \boldsymbol{\xi}_{j,c,\ell},
\qquad
\boldsymbol{\xi}_{j,c,\ell} \sim \mathcal{N}(\mathbf{0},\,0.9^2 \mathbf{I}),
\qquad
\ell = 1,\ldots,n_{jc},
\]
and construct the estimator
\begin{equation*}
\widehat{\mathbf{u}}_j(c) = \frac{1}{n_{jc}} \sum_{\ell=1}^{n_{jc}} \mathbf{X}_{j,c,\ell}.
\end{equation*}
The counts \(n_{jc}\) are drawn from a Dirichlet--multinomial distribution, which induces category imbalance and heterogeneous estimation noise. Collecting the row-wise estimates \(\widehat{\mathbf{u}}_j(c)\) yields the matrix \(\widehat{\mathbf{U}}^{(j)}\).

Given a total embedding budget \(B\), each allocation rule selects dimensions \((d_1,\ldots,d_m)\) satisfying
\begin{equation*}
\sum_{j=1}^m N_j d_j \le B.
\end{equation*}
For each predictor \(j\), the estimated matrix \(\widehat{\mathbf{U}}^{(j)}\) is truncated to rank-\(d_j\) by singular value decomposition, yielding \(\widehat{\mathbf{U}}^{(j)}_{d_j}\).

The response is generated according to the additive model
\begin{equation*}
y = \sum_{j=1}^m \langle \mathbf{u}_j(c_j), \boldsymbol{\beta}_j \rangle + \varepsilon,
\qquad
\varepsilon \sim \mathcal{N}(0,\,0.5^2),
\end{equation*}
where the coefficient vectors \(\boldsymbol{\beta}_j \in \mathbb{R}^{r_j}\) are aligned with the leading singular directions of \(\mathbf{U}^{(j)}\). Performance is evaluated by the out-of-sample MSE on an independent test set. All reported results are averaged over 300 Monte Carlo replications.

We compare four allocation rules, namely the proposed law derived from the optimization problem in Section \ref{sec:method}, equal allocation, proportional-to-cardinality allocation and proportional-to-spectral-mass allocation. Figure~\ref{fig:exp3_performance} reports the resulting test error as a function of the total budget. The theory-based law achieves the lowest, or nearly lowest, test error across the considered budgets, with its advantage being most visible in the low- and intermediate-budget regimes. In these regimes, the budget constraint is active, so suboptimal dimension allocation leads to a substantial loss in predictive accuracy. Equal allocation ignores heterogeneity across predictors and therefore distributes capacity inefficiently when approximation profiles differ. The cardinality-based law assigns excessive capacity to predictors with large \(N_j\), even when those predictors are costly to estimate, and this leads to consistently weaker performance. The spectral-mass rule is more competitive because it captures aggregate signal strength, but it does not explicitly account for the estimation cost induced by predictor cardinality. 

\begin{figure}[t]
\centering
\includegraphics[width=0.85\textwidth]{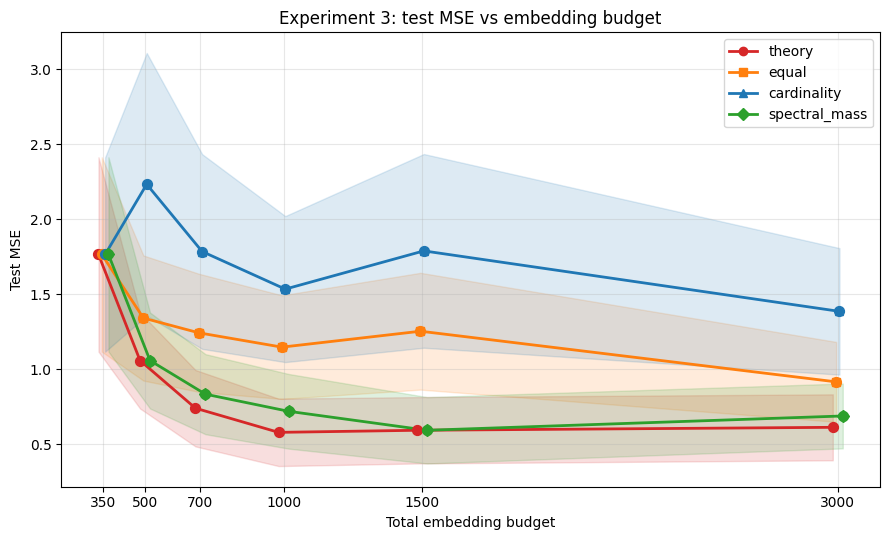}
\caption{\small Out-of-sample MSE as a function of the total embedding budget. Shaded regions indicate variability across replications.}
\label{fig:exp3_performance}
\end{figure}

\begin{figure}[t]
\centering
\includegraphics[width=0.85\textwidth]{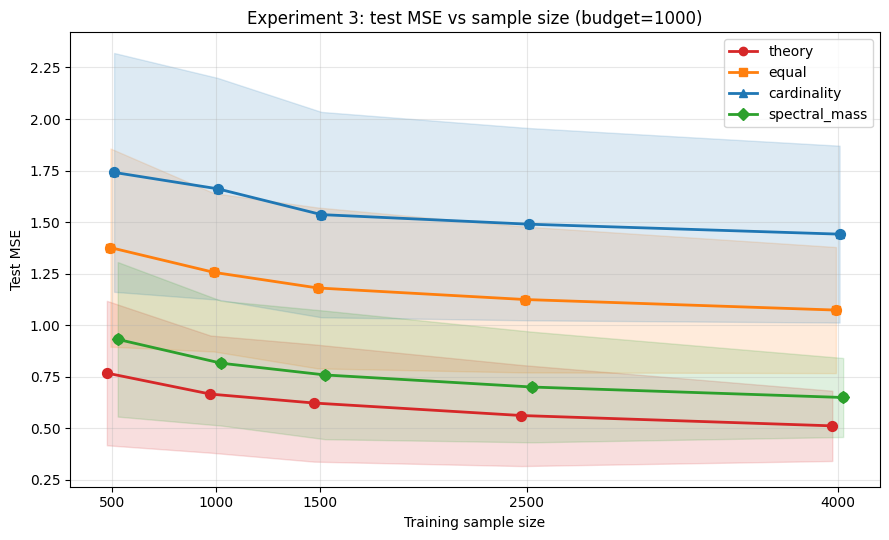}
\caption{\small Out-of-sample MSE as a function of the sample size. Shaded regions indicate variability across replications.}
\label{fig:exp3_performance_samplesize}
\end{figure}

To further assess the finite-sample behavior of the allocation rule, we repeated the multi-predictor allocation experiment over different training sample sizes while keeping the embedding budget fixed at \(B=1000\). Specifically, we considered five training sample sizes and compared the proposed allocation rule with the equal, cardinality-based and spectral mass-based allocation rules under the same data generating mechanism as in the previous experiment. This additional analysis directly evaluates how the performance of the allocation rule changes as the amount of data available for estimating the embeddings increases.

Figure~\ref{fig:exp3_performance_samplesize} shows that the proposed allocation rule achieves the lowest average test MSE across all considered sample sizes. The gain is largest relative to the cardinality-based and equal-allocation benchmarks, which either overemphasize predictor cardinality or ignore predictor heterogeneity altogether. The spectral-mass rule is again competitive, but remains less effective than the proposed rule. As the training sample size increases, all methods improve, but the ranking of the methods remains stable. Finally, Figure~\ref{fig:exp3_allocations} reports the corresponding allocation profiles for different budgets.

\begin{figure}[t]
\centering
\includegraphics[width=1\textwidth]{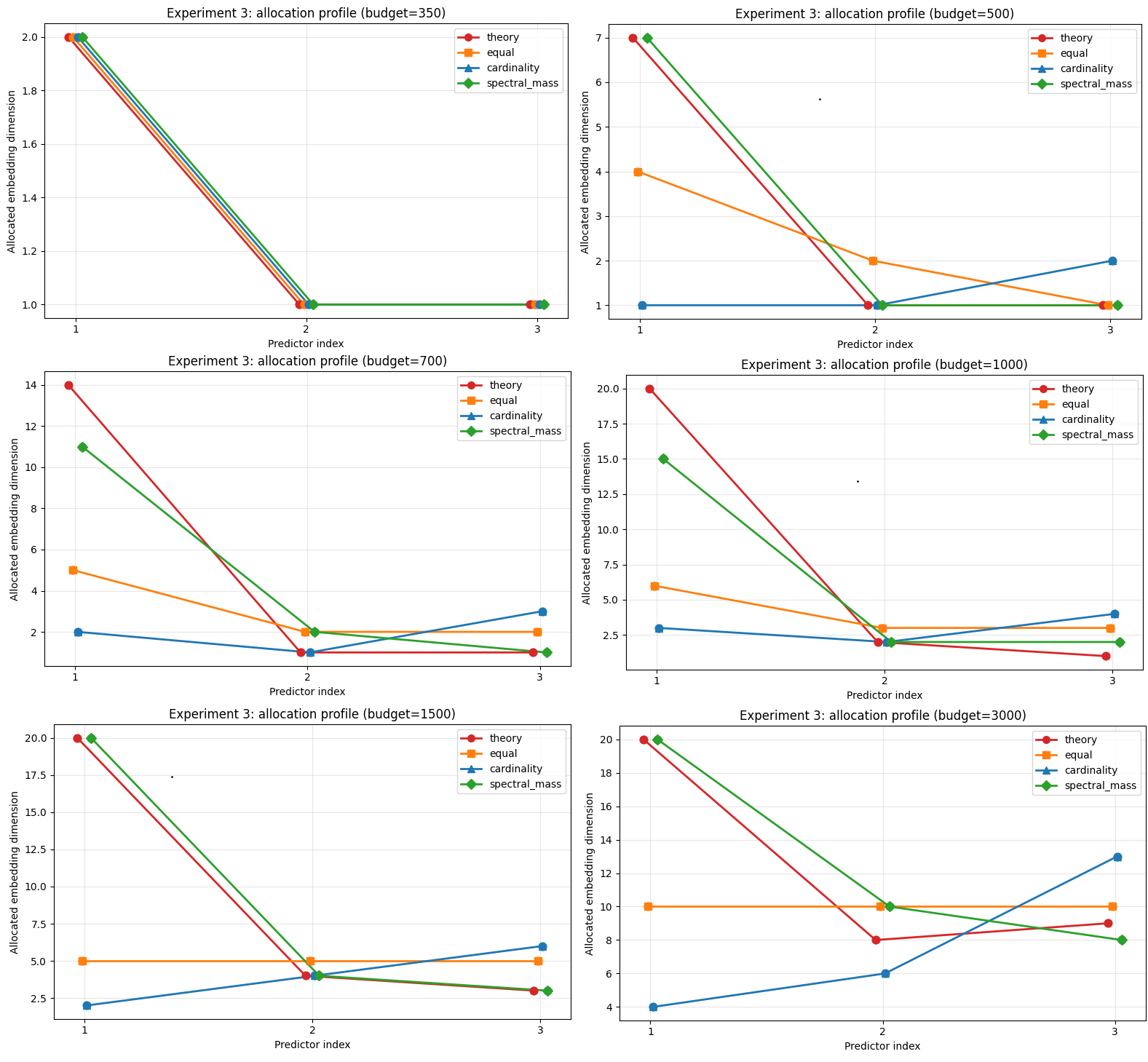}
\caption{\small Allocation of embedding dimensions across predictors for different budgets.}
\label{fig:exp3_allocations}
\end{figure}

\vspace{1em}
\subsection{Application to real-data}

We complement the simulation study with a real-data application on a heart disease prediction task. The purpose of this experiment is not primarily to maximize predictive performance on a specific benchmark. Instead, it is to examine how the proposed embedding allocation rule behaves in a realistic tabular classification setting that combines heterogeneous categorical variables. 

The data used for the purposes of the present analysis are included in the Heart Disease Dataset \citep{Lapp2019}. It contains a binary response indicating the presence or absence of heart disease. The predictors used in our experiment include both categorical and numerical variables. The categorical variables are sex, chest pain type, fasting blood sugar, resting electrocardiographic results, exercise induced angina, the slope of the peak exercise ST segment, the number of major vessels colored by fluoroscopy, thalassemia status, and the data source. The numerical variables are age, resting blood pressure, serum cholesterol, maximum heart rate achieved and ST depression induced by exercise relative to rest. Missing categorical values are imputed with the most frequent category and missing numerical values are imputed with the median. Numerical predictors are standardized. For embedding-based models, categorical variables are ordinal encoded with an additional code reserved for unseen categories at test time. 

We compare five MLPs with different embedding dimension allocation methods. The proposed budget allocation MLP is the primary model and determines feature-specific embedding dimensions under a fixed global parameter budget using the proposed allocation rule. The uniform budget MLP uses the same total budget, but distributes embedding dimensions evenly across categorical predictors. The cardinality heuristic MLP is also constrained by the selected total budget, but allocates additional embedding dimensions using only predictor cardinalities. Starting from the minimum feasible dimension, each additional dimension is assigned to the predictor with the largest cardinality-based priority score $\frac{\sqrt{\max(N_j-1,1)}}{N_j}$,
where \(N_j\) denotes the embedding cardinality of predictor \(j\), including the reserved code for unseen categories. The large unconstrained MLP assigns predictor \(j\) an embedding dimension $d_j = \left\lceil \sqrt{N_j} \right\rceil$,
without enforcing the global budget constraint. This benchmark serves as a practical high-capacity reference.  In this empirical section, \(N_j\) denotes the effective embedding cardinality after ordinal encoding, including the reserved code for unseen categories. Finally, the one-hot MLP replaces learned embeddings with one-hot encoded inputs followed by the same downstream feedforward architecture, serving as a non-embedding baseline.

To isolate the effect of the embedding allocation rule, the compared five MLPs share the same network architecture. In particular, we employ hidden layer widths of $(32, 8)$, ReLU activation functions, and a dropout rate per layer of $0.25$. Data are split into training and test sets using a stratified partition with $70\%$ of the observations for training and $30\%$ for testing \citep{Papageorgiou2025t}. Budget selection is carried out through a holdout validation procedure over the candidate budgets $\mathcal{B} = \{128, 256, 384, 512, 768, 1024, 1536, 2048, 3072, 4096\}$. For each candidate budget, we first estimate feature-specific approximation coefficients from a pilot embedding model. The pilot model uses initial embedding dimensions given by a square root cardinality law. After training, the singular values of each learned embedding matrix are used to construct an empirical coefficient that reflects the extent to which additional embedding dimensions may still be beneficial. These estimated coefficients, together with empirical feature sample sizes, are then fed into the allocation rule to obtain a budget feasible dimension vector. A final model is trained for each candidate budget and the budget is selected using validation logloss with a tolerance law that favors the smallest budget whose validation logloss is within a prescribed neighborhood of the best observed value.
All models are trained with Adam optimizer, learning rate $10^{-3}$, weight decay $10^{-4}$, batch size $32$, and a maximum of $50$ epochs with $10$ epochs as patience \citep{Papageorgiou2022b,Papageorgiou2025s}.

The real-data experiment uses the same embedding-based representation framework and the same allocation principle developed in the theoretical section. The theoretical model provides a structured regularized formulation in which feature-specific embedding matrices, prediction-layer parameters, regularization, and a finite embedding budget are explicitly represented. The empirical MLPs implement this same structure within a practical neural-network pipeline, using dropout, weight decay, and early stopping as regularization mechanisms for finite-sample control. 

Overall, the real-data experiment is designed to answer a focused methodological question. Given a fixed downstream MLP architecture and a realistic mix of categorical and numerical variables, does a principled allocation of embedding dimensions provide a more sensible use of model capacity than uniform allocation, cardinality driven heuristics, unconstrained embedding growth, or the removal of embeddings altogether.

\begin{table}[htbp]\scriptsize
\centering
\caption{Predictive, calibration and computational performance of the compared MLPs on the heart disease prediction task
}
\label{tab:model_comparison_runtime}
\begin{tabular}{lcccccccc}
\hline
Model & Acc & F1 & Prec & MCC & Val Logloss & Brier & ECE & Time/Epoch \\
\hline
\textbf{Proposed MLP}    & \textbf{0.837} & \textbf{0.851} & \textbf{0.860} & \textbf{0.671} & \textbf{0.400} & \textbf{0.121} & \textbf{0.037} & \textbf{0.054} \\
One-Hot MLP      & 0.804 & 0.824 & 0.824 & 0.604 & 0.517 & 0.166 & 0.158 & 0.029 \\
Unconstrained MLP       & 0.815 & 0.833 & 0.836 & 0.620 & 0.472 & 0.148 & 0.125 & 0.059 \\
Uniform Budget MLP     & 0.819 & 0.841 & 0.820 & 0.632 & 0.553 & 0.182 & 0.207 & 0.057 \\
Cardinality Heuristic MLP & 0.757 & 0.756 & 0.852 & 0.534 & 0.526 & 0.173 & 0.105 & 0.072 \\
\hline
\end{tabular}
\end{table}

Table~\ref{tab:model_comparison_runtime} shows that the proposed budget allocation MLP delivers the strongest overall performance among the compared architectures. It achieves the highest accuracy at 0.837, the highest F1 score at 0.851, the highest precision at 0.860, and the highest MCC at 0.671. It also attains the best probabilistic performance, with the lowest validation logloss at 0.4, the lowest Brier score at 0.121 and the lowest ECE at 0.037.

As expected, the one-hot MLP is the fastest model, with a running time per epoch of 0.029. However, its predictive and calibration performance is clearly weaker. The unconstrained and uniform budget MLPs remain competitive, but both are consistently dominated by the proposed model. For example, the unconstrained MLP reaches accuracy 0.815 and MCC 0.620, while the uniform budget MLP reaches accuracy 0.819 and MCC 0.632, both below the corresponding values of the proposed model. The cardinality heuristic MLP performs worst overall. It records the lowest accuracy at 0.757, the lowest F1 score at 0.756, and a substantially lower MCC at 0.534. Consequently, the results indicate that the proposed allocation rule provides the best balance between predictive accuracy, calibration quality and computational cost. Finally, Figure \ref{fig:final_allocation} shows the allocation of embedding dimensions across the $9$ categorical predictors of the dataset.

\begin{figure}[htbp]
    \centering
    \includegraphics[width=0.8\textwidth]{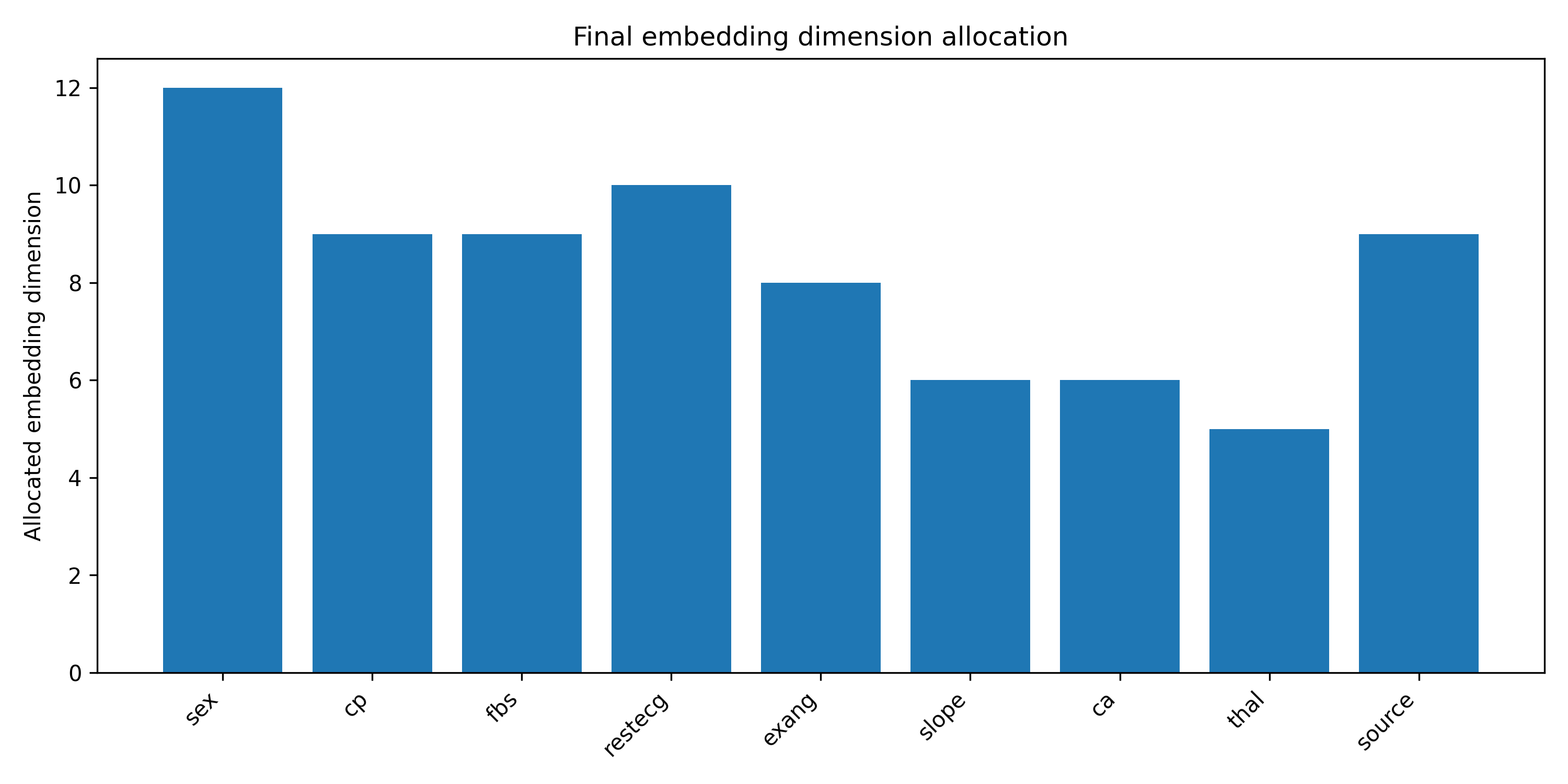}
    \caption{Final embedding dimension allocation across the categorical predictors. The figure reports the selected embedding dimension for each predictor under the proposed allocation rule.}
    \label{fig:final_allocation}
\end{figure}

Overall, the empirical findings are consistent with the theoretical predictions developed in Section 2. The proposed allocation rule yields systematic improvements in both predictive accuracy and probabilistic calibration while maintaining comparable computational cost relative to the competing predictive models. These results suggest that embedding dimension assignment, when treated as a constrained allocation problem, leads to an efficient use of representational capacity in finite-sample settings. 

\section{Discussion and Conclusions}\label{sec:concl}

In this paper, we address a design question that is common in representation learning practice but rarely formulated as an explicit methodological problem. When embeddings are introduced under finite data, the central issue is not simply whether low-dimensional representations should be used, but how limited representational capacity should be distributed across heterogeneous components. In many applications, the total representational budget is constrained by sample size, statistical stability, computational cost and implementation requirements. Under such conditions, embedding dimension assignment is not a purely expressive modeling choice, but a constrained design decision that must balance flexibility against finite-sample estimation burden.

The main contribution of the paper is to formulate embedding dimension assignment as a constrained allocation problem governed by an explicit approximation-estimation tradeoff. The analysis shows that dimensions should not be chosen uniformly or solely on the basis of simple measures, such as feature cardinality. Instead, they should be allocated according to the marginal value of additional representational flexibility under finite-sample constraints. In this way, the paper replaces a heuristic modeling choice with a mathematically explicit allocation principle, so the contribution is structural rather than merely algorithmic.

A second contribution is conceptual. In much of the practical literature on ML and DL, embedding dimensions are chosen primarily for approximation quality, while estimation effects are handled only indirectly through validation, regularization, or downstream tuning. By incorporating estimation cost directly into the design objective, the present framework recasts embedding dimension choice as a finite-sample resource allocation problem. This perspective places representation design within the broader class of constrained allocation problems and makes the problem more interpretable and analytically tractable.

The theoretical results are complemented by computational evidence that clarifies when the proposed methodology matters most. The simulation study shows that informed allocation yields the greatest benefit precisely in regimes where finite-sample effects are strongest and component heterogeneity is most pronounced, which are also the regimes in which naive laws are least reliable. The real-data experiment emphasizes that non-uniform embedding design can improve predictive performance in a practically relevant tabular setting under limited representational capacity. Together, these results tie the theory to the operating region where allocation choices are consequential rather than to high-capacity settings where they matter less, and they provide empirical support for the paper’s interpretation of embedding design as a finite-sample resource allocation problem.

The empirical conclusions should be interpreted with appropriate scope. The healthcare application provides evidence from one realistic tabular classification setting, but it does not imply that the same performance gains will necessarily occur across all domains or against all non-MLP baselines. The proposed framework is expected to be most useful when categorical predictors are heterogeneous and the sample size is limited. In such cases, different predictors may have different approximation value and different parameter cost, so uniform or cardinality-based choices allocate capacity inefficiently. When the available budget is large, or when strong non-embedding tabular methods already capture the relevant categorical structure, the practical gains may be smaller.

Moreover, the proposed rule is most appropriate when the empirical spectra are stable, categories are sufficiently observed, and approximation differences dominate estimation noise. Conversely, the empirical singular-value tail may be less reliable as a proxy for approximation value when the learned embeddings are noisy (e.g. in small-sample settings), when some categories are sparsely observed, or when the empirical singular-value tail does not display a stable monotone decay pattern. In such cases, the estimated coefficient should be interpreted cautiously as a data-driven allocation score rather than as an exact population quantity.

The same principle may also be relevant in higher capacity deep learning (DL) models, such as convolutional neural network (CNN) pipelines, where high-dimensional features derived from images are combined with categorical variables. A characteristic example would be a unified DL model designed to detect tumor presence from images of different organs, where the organ type is included as a categorical variable. In such settings, even for low-complexity CNNs \citep{Papageorgiou2025n}, standard one-hot encodings are expected to contribute weakly relative to the high-dimensional image feature vector. By contrast, learned categorical embeddings with principled dimension allocation can provide a more compact and adaptive representation.

A few points nevertheless delimit the scope of the present analysis and indicate natural directions for future work. The framework is built on a decomposition of risk into approximation and estimation terms so that the allocation problem can be studied in a transparent and tractable form. This abstraction makes it possible to derive explicit structural results and to isolate the core finite-sample allocation mechanism in a transparent way. However, several practically relevant features remain unexplored, including optimization effects, richer dependence across feature groups and shared latent structure that may induce additional coupling across components \citep{Huang2021,Salter-Townshend2017}. Addressing these elements will require models that jointly capture statistical error, optimization dynamics and cross-component dependence, together with empirical procedures for estimating these quantities and validating adaptive allocation rules in realistic data settings.

\section*{Data Availability}
The Heart Disease Dataset used in the present analysis is publicly available and may be obtained from \url{https://www.kaggle.com/datasets/johnsmith88/heart-disease-dataset}.

\section*{Declaration of competing interests}
The author declares that he has no known competing financial interests or personal relationships that could have appeared to influence the work reported in this paper.

\bibliographystyle{apalike}
\bibliography{export.bib}

\end{document}